%11-11

\documentclass[12pt]{article}

\usepackage{amsthm,amsmath,latexsym,amssymb}
\usepackage[dvips]{graphics}
\usepackage{epsfig}
\usepackage{graphicx}
\usepackage{color}
\usepackage{hyperref}

\usepackage{epsf}
\usepackage{psfrag}
%\newdimen\epsfxsize
%\newdimen\epsfysize

\input epsf.tex
%\newdimen\epsfxsize
%\newdimen\epsfysize

  \oddsidemargin 0in
  \evensidemargin 0in
  \topmargin -0.6in
  \textwidth6.5in
  \textheight 8.5in

\newcommand{\be}{\begin{equation}}
\newcommand{\ee}{\end{equation}}
\newcommand{\bes}{\begin{equation*}}
\newcommand{\ees}{\end{equation*}}

\newcommand{\eps}{\varepsilon}
\renewcommand{\P}{\mathbb P}

\newcommand{\C}{\mathbb C}
\newcommand{\CC}{\overline{\mathbb C}}
\renewcommand{\H}{\mathbb H}

\newcommand{\PP}{\mathbb P}

\newcommand{\dist}{\mathrm{dist}}
\newcommand{\diam}{\mathrm{diam\,}}

\newtheorem{thm}{Theorem}[section]
\newtheorem{prop}[thm]{Proposition}
\newtheorem{remark}[thm]{Remark}
\newtheorem{defn}[thm]{Definition}
\newtheorem{cor}[thm]{Corollary}
\newtheorem{example}[thm]{Example}
\newtheorem{lemma}[thm]{Lemma}

%\makeatletter
%\renewcommand{\theequation}{\arabic{section}.\arabic{equation}}
%\@addtoreset{equation}{section} \makeatother

\begin{document}

%\baselineskip=1.2\baselineskip
%\begin{doublespace}

\title{\bf On the Riemann surface type of Random Planar Maps}
\bigskip
\author{{\bf James T. Gill}\footnote{Research supported by NSF Grant DMS-1004721}~\mbox{ }{\bf and}  {\bf Steffen Rohde}\footnote{Research supported by NSF Grant DMS-0800968.}}
\maketitle

\abstract{We show that the (random) Riemann surfaces of the Angel-Schramm Uniform Infinite Planar Triangulation and of Sheffield's infinite necklace construction are both parabolic. In other words, Brownian motion on these surfaces is recurrent. We obtain this result as a corollary to a more general theorem on subsequential distributional limits of random unbiased disc triangulations, following work of Benjamini and Schramm.}

\tableofcontents
\bigskip

\section{Introduction and results}\label{intro}

Random maps (such as a random triangulation of the sphere, uniformly chosen among all triangulations with a fixed number of vertices) are of interest in probability, 
statistical physics, and  combinatorics. Significant progress on their understanding has been gained in recent years based on 
new insights in combinatorics (specifically, bijections between maps and labelled trees \cite{S}, \cite{BFG}),
in probability (the works \cite{BS}, \cite{AS},  \cite{LG}  and many others)
and in statistical physics (notably the work \cite{DS}).

The existence of certain {\it local limits} (e.g. the UIPT of Angel and Schramm \cite{AS}) 
and {\it scaling limits} (e.g. the work of J.F. Le Gall \cite{LG}) has been established, and first topological properties
have been proved (such as one-endedness of the UIPT \cite{AS}, and homeomorphicity of the scaling limits to the sphere 
\cite{LGP}, \cite{M}). There are a large number of conjectures concerning further properties of these limits,
concerning their metric and geometric structure, eg (\cite{DS}, \cite{B}, \cite{Sh}).

There are two well-developed mechanisms to give some sort of conformal structure to 
a discrete object such as a triangulation: One is by means of Koebe-Andreev-Thurston {\it circle packings}:
If the underlying graph $G$ has no loops and no double edges, then there is an
essentially unique (up to M\"obius transformation) collection
of closed discs on the sphere, with pairwise disjoint interiors, such that the tangency
graph (discs correspond to vertices of the graph, and tangencies of discs correspond to edges)
is isomorphic to $G$. See \cite{BS} and \cite{B} for this approach, and \cite{St} or \cite{R} for background on circle packings. 
A second and perhaps more natural way works for all types of triangulations and is provided by a Riemann surface structure of the simplicial complex associated with the triangulation. 
Roughly speaking, one just glues equilateral triangles isometrically along their edges, see Section 2.2 below.
In the limiting case of infinite triangulations, the complex is no longer compact, but in our setting it is 
still simply connected. By the uniformization theorem, the Riemann surface is 
thus either parabolic (conformally equivalent to the complex plane) or hyperbolic (equivalent to the disc). An equivalent characterization is that Brownian motion on the complex is either recurrent
(in the parabolic case) or transient.

In this paper, we prove parabolicity of distributional limits of ``unbiased'' rooted triangulations
(roughly, given an unrooted triangulation T, each face has the same probability of being the root; 
see Section 2.3 for details):

\begin{thm} \label{main}
Suppose that $(T,o)$ is a subsequential distributional limit of a sequence $(T_n,o_n)$ of random rooted finite unbiased disc triangulations (limit with respect to the topology induced by the combinatorial distance $d_c$ of triangulations, see Section \ref{S:2.1}). 
Suppose further that $(T,o)$ has one end almost surely, 
and that the graph distance $d_{gr}(o_n,\partial T_n)\to \infty$ in law.
Then the Riemann surface $R(T)$ is parabolic a.s.
\end{thm}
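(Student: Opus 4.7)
The plan is to adapt the strategy of Benjamini and Schramm for recurrence of planar distributional limits to the Riemann-surface setting. Three ingredients are needed: a face mass-transport principle coming from the unbiased rooting, the uniformization of the limit surface under the hyperbolicity assumption, and a mass-balance argument combining them.

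First I would derive a face mass-transport principle for the limit $(T,o)$. Because $o_n$ is uniform among the faces of $T_n$, for any nonnegative local function $F$ on doubly-rooted disc triangulations,
\[
\mathbb{E}\Bigl[\sum_f F(T_n,o_n,f)\Bigr] = \mathbb{E}\Bigl[\sum_f F(T_n,f,o_n)\Bigr],
\]
since both sides equal $\mathbb{E}[N_n^{-1}\sum_{f,g}F(T_n,f,g)]$, with $N_n$ the number of faces of $T_n$. The hypothesis $d_{gr}(o_n,\partial T_n)\to\infty$ ensures that a local $F$ does not see the boundary in the limit, so the identity survives the subsequential distributional convergence to $(T,o)$.

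Next, suppose for contradiction that $R(T)$ is hyperbolic on an event of positive probability. On this event, uniformize $\phi:R(T)\to \mathbb{D}$ sending the center of the root face to $0$ (a map unique up to rotation). The faces $f$ then map to Jordan domains $\phi(f)\subset\mathbb{D}$ with pairwise disjoint interiors, so
\[
\sum_f |\phi(f)| \le \pi.
\]
The image of the root face contains $0$ in its interior, so $|\phi(o)|>0$ almost surely on the hyperbolic event.

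To conclude, I would apply the mass-transport identity to an intrinsic functional that captures $|\phi(o)|$ on one side and a large-scale average on the other. A natural attempt is to transport, for each $r$, the mass $|\phi_f(f)|/|B_r(f)|$ from a face $f$ to every face $g$ in its graph-distance ball $B_r(f)$ of radius $r$, where $\phi_f$ denotes the uniformization of $R(T)$ centered at $f$; letting $r\to\infty$ and using one-endedness to see that $|B_r(o)|\to\infty$ should drive the right-hand side to zero via the global $\pi$-bound, while the left-hand side remains bounded below by $\mathbb{E}[|\phi(o)|\,\mathbf{1}_{\mathrm{hyp}}]>0$. The principal obstacle is precisely that $|\phi_f(f)|$ depends on which face is placed at the origin, so to exploit the single global area bound $\sum_f|\phi_o(f)|\le\pi$ one needs a quantitative comparison between uniformizations centered at different faces, equivalently uniform control on the distortion of $\phi$ across individual triangles despite the conical singularities coming from high-degree vertices. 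The one-end hypothesis is essential: it ensures that $R(T)$ is globally simply-connected, so that the Riemann mapping theorem produces a single canonical conformal equivalence with $\mathbb{D}$, and that metric balls in $T$ grow without bound in face count, which is what allows the averaging to diffuse the bounded total area to zero.
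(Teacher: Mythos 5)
You have honestly flagged the obstacle in your last step, but it is worth being clear that it is fatal rather than technical, and that it cannot be patched within your framework. Test the scheme on the $7$-regular triangular lattice: it is a one-ended, simply connected disc triangulation, it satisfies the face mass-transport principle (it is face-transitive with unimodular symmetry group), and $|B_r(o)|\to\infty$; yet its Riemann surface is hyperbolic. Every ingredient your argument actually uses --- the MTP inherited from unbiased rooting, uniformization of the one-ended simply connected limit, divergence of ball volumes --- is available for this example, so no contradiction can be derived from them alone. Concretely, your transport identity reads
\[
\mathbb{E}\bigl[\,|\phi_o(o)|\,\bigr]=\mathbb{E}\Bigl[\sum_{f\in B_r(o)}\frac{|\phi_f(f)|}{|B_r(f)|}\Bigr],
\]
and in the transitive example $|\phi_f(f)|$ equals the same constant $c>0$ for every face and $|B_r(f)|=b_r$ for every face, so the right-hand side equals $c$ for every $r$: it is constant, not tending to zero. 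The global bound $\sum_f|\phi_o(f)|\le\pi$ does force $|\phi_o(f)|\to0$ along any enumeration of faces, but your transported masses are the \emph{recentred} areas $|\phi_f(f)|$, which need not decay at all; the ``quantitative comparison between uniformizations centered at different faces'' you ask for would have to convert one into the other, and the $7$-regular lattice shows it is false in the strength you need. The hypothesis your reduction discards is exactly what excludes this example: the $7$-regular lattice is \emph{not} a distributional limit of uniformly rooted finite disc triangulations with $d_{gr}(o_n,\partial T_n)\to\infty$ (linear isoperimetry; see Example \ref{expl}), although it is such a limit of uniformly rooted triangulations of closed higher-genus surfaces, for which the MTP still holds. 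So parabolicity cannot follow from unimodularity plus one-endedness plus uniformization; the finite planar approximants must enter the argument quantitatively. (Your first step is essentially fine, modulo the standard point that a functional built from $\phi_f$ is non-local, so one needs full unimodularity of the limit rather than the identity for local $F$ only.)

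The paper's proof also uniformizes, but before passing to the limit, and it exploits the uniform rooting through a different mechanism than mass transport. For each finite $T_n$ one fixes a conformal map $\phi_n:R(T_n)\to\C$ normalized so that the root center maps to $0$ and the nearest other interstice has distance $1$; the bounded geometry of interstices (Lemma \ref{bddgeometry}, proved by Koebe distortion on the union of a face with its neighbors, which has only finitely many conformal types --- close in spirit to your distortion remark about conical singularities, and notably independent of vertex degrees) together with Montel's theorem yields tightness, hence a joint subsequential distributional limit $(T,o,\phi)$ with $\phi$ a normalized conformal map of $R(T)$. The unbiased rooting is then used via Lemma 2.3 of \cite{BS} (``Lemma A''): in any \emph{finite planar} set $V\subset\C$ the proportion of $(\delta,s)$-supported points is at most $c(\delta)/s$, so a uniformly chosen root almost never sees two accumulation points; applied to the finite center configurations $\phi_n(c_f)$ this gives Proposition \ref{parabolic-limit}, that the limiting center set has at most one accumulation point in $\C$ almost surely. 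Finally Proposition \ref{parabolic} (with Remark \ref{rmk}), again resting on the bounded interstice geometry, upgrades ``finitely many accumulation points plus $d_{gr}(o,\partial T)=\infty$'' to parabolicity. In short: to repair your proof, replace the mass-transport area balance with this finite-planar-set lemma plus a tightness/Montel argument for the uniformizing maps of the finite surfaces --- that is precisely where planarity and the uniform root do quantitative work that the MTP cannot.
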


If $d_{gr}(o_n,\partial T_n)$ does not tend to $\infty$, the limiting Riemann surface has a boundary,
and reflecting Brownian motion is still recurrent. An instructive example is the 7-regular graph,
see Section 2.3 below.

There is no need to restrict to triangulations: Theorem \ref{main} and its proof remain
true with only minor changes for unbiased $m$-angulations, or slightly more generally by angulations with uniform bound on the degree of the faces.

Theorem \ref{main} is similar to the result of Benjamini and Schramm \cite{BS} concerning the recurrence of a simple random walk on 
subsequential distributional limits of unbiased graphs. In fact, our method of proof is an
adaptation of their method to the setting at hand. However, due to the use of circle packings, their
proof only works under the additional assumption of uniformly bounded vertex degress. The perhaps most significant contribution of this paper is the realization that, when working with the Riemann surface, we do not need to make this extra assumption (we only use the trivial fact that the dual graph has bounded degree).

In \cite{AS}, it was noted that the method of \cite{BS} can be adapted to show that the circle packing type of the UIPT is parabolic. It was also conjectured that simple random walk on the UIPT is recurrent a.s. As the vertex degree in the UIPT is a.s. unbounded, the method of \cite{BS} does not yield recurrence. While we are not able to prove this conjecture, 
our theorem does show that Brownian motion on the UIPT is recurrent a.s. 
In other words, we prove

\begin{cor}\label{coruipt}
The Riemann surface associated with the UIPT is a.s. parabolic. 
\end{cor}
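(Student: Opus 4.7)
The plan is to deduce this as an immediate application of Theorem \ref{main}, once the right sequence of finite unbiased disc triangulations is identified.

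\textbf{The approximating sequence.} The UIPT is defined in \cite{AS} as the local limit of uniformly chosen rooted triangulations of the sphere with $n$ faces. For each $n$ I would let $\tilde T_n$ denote such a uniform triangulation, and $o_n, f_n$ two independent uniform random faces of $\tilde T_n$. Removing the open face $f_n$ produces a disc triangulation $T_n$ with boundary $\partial T_n = \partial f_n$. Since $o_n$ and $f_n$ are exchangeable uniform faces of $\tilde T_n$, the pair $(T_n, o_n)$ is unbiased in the sense of Section 2.3. The discrepancy between the edge--rooting convention of \cite{AS} and the face--rooting used here is a bounded re-rooting and has no effect on parabolicity of the limiting surface.

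\textbf{Checking the three hypotheses of Theorem \ref{main}.} (i) The existence of the local limit gives, for every fixed $R$, tightness of the random variable $|B_R(o_n)|$; since $f_n$ is an independent uniform face among the $\Theta(n)$ faces of $\tilde T_n$,
\[
\PP\bigl[f_n \in B_R(o_n)\bigr] \;=\; \mathbb E\!\left[\frac{|B_R(o_n)|}{\#\text{faces of }\tilde T_n}\right] \;\longrightarrow\; 0.
\]
Hence any fixed combinatorial neighborhood of $o_n$ in $T_n$ coincides with the corresponding neighborhood in $\tilde T_n$ with probability tending to $1$, so $(T_n, o_n)$ converges in law to the (face-rooted) UIPT. (ii) One-endedness of the UIPT was proved by Angel--Schramm in \cite{AS}. (iii) The same estimate yields $d_{gr}(o_n,\partial T_n)\to\infty$ in law, since $\partial T_n=\partial f_n$ and the event $\{d_{gr}(o_n,\partial T_n)\le R\}$ forces $f_n$ to lie within combinatorial distance $R+1$ of $o_n$.

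\textbf{Main difficulty.} There is essentially no mathematical content beyond Theorem \ref{main}; the corollary is a matter of setting up the approximating sequence correctly. The one point requiring care is administrative rather than geometric: one must reconcile the conventions of \cite{AS} (edge roots, type II triangulations of the sphere) with the framework of Theorem \ref{main} (face roots, disc triangulations with boundary) and verify that the re-rooting and the removal of the independent face $f_n$ do not alter the distributional local limit. No new quantitative estimates on the UIPT --- such as diameter asymptotics or volume-growth bounds --- are required; the mere existence of the local limit, via tightness of $|B_R(o_n)|$, is enough.
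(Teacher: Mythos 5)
Your proposal is correct and takes essentially the same route as the paper: remove a uniformly chosen face from a (near-)uniform sphere triangulation, root at another uniform face, get unbiasedness by exchangeability, deduce $d_{gr}(o_n,\partial T_n)\to\infty$ and the convergence $(T_n,o_n)\to$ UIPT from tightness of ball volumes, and invoke one-endedness from \cite{AS} before applying Theorem \ref{main}. The only (inconsequential) differences are that you choose $o_n,f_n$ independently rather than as an ordered pair of distinct faces, so the probability-$O(1/n)$ event $o_n=f_n$ should be excluded, and your worry about an edge-rooting convention is moot since both this paper and \cite{AS} root at oriented faces.
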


Notice that parabolicity of the UIPT is an implicit part (though much weaker) of Conjecture 7.1 in \cite{DS}.
Another interesting random infinite surface, the ``infinite necklace", was recently constructed by Scott Sheffield, see Figure \ref{fig:rbtriangle} and Section 3.2 for a description. As another application of 
Theorem \ref{main}, we prove his
conjecture concerning parabolicity of the associated Riemann surface:

\begin{cor}\label{corneck}
Brownian motion on the infinite necklace surface is almost surely recurrent.
\end{cor}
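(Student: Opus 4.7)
The plan is to exhibit the infinite necklace surface as a subsequential distributional limit of a sequence of random rooted finite unbiased disc triangulations satisfying the hypotheses of Theorem \ref{main}, and then to invoke that theorem directly.

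First I would use Sheffield's construction to build natural finite approximations $T_n$. His infinite necklace is produced by a stationary Markovian procedure that adjoins successive rings (``necklaces'') of triangles to a growing disc triangulation, starting from a single triangle (or small seed). Let $T_n$ denote the disc triangulation obtained after gluing the first $n$ necklaces; its boundary $\partial T_n$ is the outer cycle of the $n$-th necklace. Each $T_n$ is a finite random disc triangulation, and as $n\to\infty$ the naturally rooted $T_n$ exhausts the infinite necklace in the local topology induced by $d_c$.

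Second, I must pass to an \emph{unbiased} rooting. Let $o_n$ be a face of $T_n$ chosen uniformly at random among all faces, independently of $T_n$. The point here is that the expected number of faces is dominated by the outermost necklaces, so a priori the uniform root might concentrate near $\partial T_n$. I would address this by first reading off, from Sheffield's construction, enough control on the conditional law of the inner necklaces given the outer ones (reversibility / stationarity of the necklace chain) to conclude that, along an appropriate subsequence $n_k\to\infty$, the doubly random object $(T_{n_k},o_{n_k})$ has a weak subsequential limit $(T,o)$ in the $d_c$-topology whose law agrees with Sheffield's infinite necklace rooted at a ``typical'' face. Compactness of the space of rooted triangulations under $d_c$ guarantees the existence of the subsequential limit; identifying it as the necklace is the content of this step.

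Third, I would verify the two remaining hypotheses of Theorem \ref{main} for this subsequential limit. One-endedness of $(T,o)$ almost surely is built into the necklace construction: each necklace is a cycle separating the already-constructed inner triangulation from the region still to be filled, so removing a finite sub-triangulation leaves exactly one unbounded complementary component. The condition $d_{gr}(o_n,\partial T_n)\to\infty$ in law follows because, by the unbiased choice of $o_n$ and the fact that the number of necklaces tends to infinity, the probability that $o_n$ lies in any fixed outer band of necklaces tends to $0$. With these checks in place, Theorem \ref{main} gives parabolicity of $R(T)$ almost surely, which is precisely the recurrence of Brownian motion on the infinite necklace surface.

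The main obstacle I anticipate is the second step: reconciling uniform-over-faces rooting on the truncations $T_n$ with the naturally rooted infinite necklace. If uniform rooting asymptotically biases toward the outer necklaces, one must either restrict the uniform choice to a suitable interior band (and show this makes no difference in the limit) or exploit a stationarity property of Sheffield's necklace Markov chain to re-root at a typical interior face. Getting this ``unbiasing'' right, while preserving both one-endedness and $d_{gr}(o_n,\partial T_n)\to\infty$, is the crux of the argument.
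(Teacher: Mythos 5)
Your top-level strategy coincides with the paper's (realize the necklace as a distributional limit of finite unbiased disc triangulations and invoke Theorem \ref{main}), but there is a genuine gap exactly at the steps you yourself flag as the crux, and your plan also rests on a misreading of the construction. Sheffield's necklace, as used in this paper, is not built by adjoining concentric rings of triangles: a \emph{single} triangle is glued at each step along one marked active edge, the choices being encoded by an i.i.d.\ word in the letters $\{B,b,R,r\}$, and the infinite surface is the gluing $T_+(X)\cup T_-(Y)$ of two independent half-plane constructions. Consequently your proposed truncations (``the first $n$ necklaces,'' with boundary ``the outer cycle of the $n$-th necklace'') do not exist in this model, and your one-endedness argument via separating cycles does not apply as stated (one-endedness is still easy, but by a different route: any finite subgraph $H$ lies in the interior of the finite disc $T_+(X)\cup T_-(Y)|_m$, whose complement is connected through its boundary).

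More importantly, the two hard steps are left unresolved. For the unbiasing, ``reversibility/stationarity of the necklace chain'' is a gesture, not an argument; the paper resolves this with the exact combinatorial identity \eqref{fund}: re-rooting $T(x_1\cdots x_n)$ at the triangle created at step $k$ is literally the same rooted triangulation as the gluing of the two independent words $x_k\cdots x_n$ and the reversed, color-swapped prefix. Thus $T_n=T(X,k)$ with $k$ uniform in $\{1,\dots,n\}$ is unbiased by construction, and Lemma \ref{rblimit} identifies the limit as $T_+(X)\cup T_-(Y)$ with no re-rooting bias ever arising. For the boundary-distance hypothesis, your soft ``outer band'' argument fails precisely for the reason you anticipate: a uniform face could a priori concentrate near $\partial T_n$. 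The paper rules this out quantitatively via the random-walk encoding $S_k=(X_k,Y_k)$: Azuma's inequality gives $|\partial T_n|=O(\sqrt{n\log n})$ with high probability (Lemma \ref{boundarysize}), exponential degree tails give maximal degree $O(\log n)$ (Lemma \ref{degree}), and hence the $k$-neighborhood of the boundary contains at most $\sqrt{n}\,(C\log n)^{k+1}$ of the $n$ faces, so $\PP[d_{gr}(o_n,\partial T_n)\le k]\to 0$ (Lemma \ref{boundarydistance}). Without these inputs, or substitutes for them, your second and third steps remain unproven, so the proposal is a correct outline of the paper's strategy but not a proof.
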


We will now outline the proof of Theorem \ref{main}.
How can one decide if a given triangulated Riemann surface $R$ is parabolic or hyperbolic?
This {\it type problem} has been studied from many perspectives, but 
our approach is very simple and is motivated by the method of Benjamini and Schramm \cite{BS}.
Fix a conformal map $\phi:R\to\C$. 
The direct analog of their method would be to associate with each vertex $v$ of the triangulation
the {\it half-flower} $H_v\subset R$ (see Figure \ref{fig:flower}) and thus obtain a packing $P$ of the plane
by topological discs $\phi(H_v)$ with tangency graph the triangulation. The role of the circle packing theorem is now played by the uniformization theorem, namely the existence of $\phi$.
If it then were true that the discs $\phi(H_v)$ have {\it bounded geometry}
(say, inradius and outradius comparable up to constants independent of $v$), 
then parabolicity $\phi(R)=\C$ would be equivalent to showing that the set of the centers of the packing $\{\phi(v):v\in V\}$ has no finite accumulation point.
Unfortunately, due to the unboundedness of the degree, we have not been able to show that the 
half-flowers have bounded geometry. But switching to the dual graph and considering the centers of the triangles instead of the vertices, we do obtain a test for parabolicity:

\begin{prop}\label{parabolic}
Let  $\phi$ be a conformal map of $R$ into $\CC$. 
Then $R$ is parabolic if and only if the following two conditions are satisfied:
First, the set of centers of triangles $\{\phi(c)\}$ has precisely one accumulation point in 
$\CC.$ And second, $d_{gr}(v,\partial T)=\infty$ for one (hence all) vertices $v$.
\end{prop}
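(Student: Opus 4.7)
The plan is to prove the stronger equality
\[
\{\text{accumulation points of } \phi(c) \text{ in } \CC\}\ =\ \partial\phi(R),
\]
from which the proposition follows immediately. Indeed, $\phi(R)$ is a simply connected open subset of $\CC$, so $\partial\phi(R)$ reduces to a single point exactly when $\phi(R)=\CC\setminus\{p\}$, equivalently $R\simeq\C$, equivalently $R$ is parabolic. Condition~(2) simply ensures that $R$ has no boundary, placing us inside the $\C$-versus-$\D$ dichotomy.

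The engine is a tile-by-tile Koebe distortion estimate. For each triangle $t$, I would choose a conformal parametrization $\eta_t:\D\to U_t$ of a disc neighborhood $U_t\supset\Delta_t$ in $R$, with $\eta_t(0)=c_t$ and $\eta_t^{-1}(\Delta_t)$ a fixed Euclidean equilateral shape inside $\D_{1/2}$. The restriction $\phi\circ\eta_t:\D\to\CC$ is univalent, and the classical distortion theorem, applied in the spherical metric on $\CC$ (or equivalently in the Euclidean metric after a M\"obius normalization sending $\phi(c_t)$ to $0$), yields universal constants $c_1,c_2>0$ with
\[
B_{\mathrm{sph}}(\phi(c_t),c_1\delta_t)\ \subset\ \phi(\Delta_t)\ \subset\ B_{\mathrm{sph}}(\phi(c_t),c_2\delta_t),
\]
where $\delta_t$ denotes the spherical diameter of $\phi(\Delta_t)$. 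Because the image triangles have pairwise disjoint interiors and $\CC$ has spherical area $4\pi$, this yields the key finiteness lemma: for every $\rho>0$, at most $O(\rho^{-2})$ triangles satisfy $\delta_t\geq\rho$.

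The identification is now immediate. Centers form a discrete subset of the open set $\phi(R)$, so all their accumulation points in $\CC$ lie in $\partial\phi(R)$. Conversely, fix $p\in\partial\phi(R)$, pick $w_n\in\phi(R)$ with $w_n\to p$, and write $w_n\in\phi(\Delta_{t_n})$. If $\delta_{t_n}$ stayed bounded below along some subsequence, the finiteness lemma would force one triangle $t^{\ast}$ to recur infinitely often, giving $p=\lim w_n\in\phi(\Delta_{t^{\ast}})\subset\phi(R)$ and contradicting $p\in\partial\phi(R)$. Hence $\delta_{t_n}\to 0$ along a subsequence, and the distortion sandwich yields $\phi(c_{t_n})\to p$, exhibiting $p$ as an accumulation point of the centers.

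The main obstacle I anticipate is establishing the Koebe distortion estimate uniformly in the spherical metric, in particular for triangles whose images approach $\infty$ in $\CC$. This is precisely the reason for the M\"obius normalization mentioned above, which reduces every case to the standard Euclidean distortion theorems on a slightly larger conformal chart around each tile; the fixed intrinsic shape of the equilateral triangles then guarantees that all implicit constants are universal.
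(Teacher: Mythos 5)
Your high-level plan (identify the accumulation set of the centers with $\partial\phi(R)$, then invoke the $\C$-versus-$\D$ dichotomy) is sound and in fact parallel to the paper's argument, but the engine of your proof fails at exactly the point this paper is designed to handle: the distortion sandwich for the images of \emph{full} triangles $\phi(\Delta_t)$ cannot hold with universal constants when vertex degrees are unbounded. The closed triangle $\Delta_t$ contains the three vertices of $t$, so any chart domain $U_t$ compactly containing $\Delta_t$ must contain a neighborhood of each vertex $v$, hence pieces of all $\deg(v)$ triangles of the flower around $v$; the conformal type of $U_t$ therefore depends on these degrees, and $\eta_t^{-1}(\Delta_t)$ cannot be ``a fixed Euclidean equilateral shape inside $\D_{1/2}$''. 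Quantitatively: by the definition of the surface structure in Section \ref{S:2.2}, the coordinate map at a vertex $v$ is $z\mapsto z^{6/\deg(v)}$, so in every conformal chart the triangle has angle $2\pi/\deg(v)$ at (the preimage of) $v$. A clean model shows the sandwich constants degenerate: uniformize the flower of a degree-$n$ vertex onto $\overline{\D}$ with $v\mapsto 0$; the dihedral symmetry forces the edges at $v$ onto radii, so each of the $n$ triangles maps onto a sector of angle $2\pi/n$, which has diameter comparable to $1$ while the largest disc centered at the image of the centroid inside it has radius $O(1/n)$. Thus the inclusion $B(\phi(c_t),c_1\delta_t)\subset\phi(\Delta_t)$ fails for any universal $c_1$, and with it your $O(\rho^{-2})$ finiteness lemma (arbitrarily many long thin slivers of diameter $\geq\rho$ can carry negligible area). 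This is precisely the obstruction the paper emphasizes --- the hyperbolic diameter of half-flowers within $B_1$ tends to infinity with the degree of the root vertex --- and it is why a proof along your lines works only under a uniform degree bound, which is the regime already covered by the Benjamini--Schramm/He--Schramm circle-packing method.

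The paper's repair is to replace $\Delta_t$ by the \emph{interstice} $I_f$, the medial triangle with vertices at the edge midpoints: it has the same center $c_f$, but it avoids the vertices and is compactly contained in the union $U$ of $f$ with its edge-neighbors, and $U$ has only finitely many conformal types (four triangles, or three in the presence of a double edge), independent of all vertex degrees. This yields exactly your sandwich and center-separation estimates with genuinely universal constants, plus a uniform bound on the hyperbolic diameter of $I_f$ (Lemma \ref{bddgeometry}). Note one further adjustment you would need: interstices do not tile $R$, so your step ``$w_n\in\phi(\Delta_{t_n})$'' does not transfer --- a point near $\partial\phi(R)$ may lie deep inside the image of a high-degree half-flower, far from every interstice. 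The paper instead picks an \emph{accessible} point $w\in\partial\phi(R)$ and a curve $\gamma$ to it; the preimage curve leaves every compact set, hence meets infinitely many interstices, and \eqref{hypdiam} together with the Koebe distortion theorem gives $\diam \phi(I_j)\leq C'\,|w-\phi(z_j)|$, so the centers $\phi(c_j)$ accumulate at $w$; density of accessible points in the connected, infinite set $\partial\phi(R)$ then produces infinitely many accumulation points in the hyperbolic case, which is the contrapositive of your identification. With the interstice substitution and this curve argument, your outline closes up and essentially reproduces the paper's proof.
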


The proof is given in Section 2.2 and consists in showing that the {\it interstices} of the half-flower packing have bounded geometry. 

We are now in the situation of \cite{BS} and consider distributional limits $(G,o)$ of finite random 
unbiased graphs $(G_n,o_n)$. We re-interpret their Lemma 2.3 and proof of Prop 2.2 in the following
way, without the need to refer to packings. Assume that for each rooted graph
we have an embedding $g_n$ of the set of vertices into the plane that is normalized
in such a way that $g_n(o_n)=0$ and that the closest point among $\{g_n(v):v\in V_n\setminus\{o_n\}\}$
is of distance 1 from 0. Assume further that any two embeddings $g_n, g_n'$ of the {\it same} (unrooted) graph differ only by a real dilation and complex translation.
Finally, assume that $(G_n,o_n,g_n)$ converge to $(G,o,g)$ in the sense that 
$(G_n,o_n)\to (G,o)$ in distribution (with respect to combinatorial distance defined in Section 2.1) and that furthermore
$g_n\to g$ in distribution (with respect to a topology defined in Section 2.3).

\begin{prop}\label{BSLemma}
If $(G_n,o_n,g_n)$ converge to $(G,o,g)$ in the above sense, then the random set of points
$g(G)$ has at most one accumulation point in $\C$ almost surely.
\end{prop}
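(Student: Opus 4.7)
I argue by contradiction via the mass transport principle, which $(G,o,g)$ inherits from the uniformly rooted finite graphs $(G_n,o_n)$. Suppose that with positive probability the set $g(V(G))$ has at least two distinct accumulation points in $\C$. Set $s(v):=\min_{w\neq v}|g(v)-g(w)|$; by the normalization $s(o)=1$ and every accumulation point lies outside the open unit disc. The triangle inequality
\begin{equation*}
|g(v)-g(w)|\geq\max(s(v),s(w))\geq\tfrac{1}{2}(s(v)+s(w))
\end{equation*}
shows that the open discs $D(v)=B(g(v),s(v)/2)$ form a planar packing. Since $s(w)\leq|g(w)|$ for $w\neq o$, for any $R\geq 1$ the discs with centers in $B(0,R)$ all lie in $B(0,3R/2)$, so disjointness yields the area bound $\sum_{|g(w)|\leq R}s(w)^2\leq 9R^2$.

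The core computation uses an invariant mass transport built from this packing. Consider
\begin{equation*}
F(v,w) = \Bigl(\frac{s(w)}{s(v)}\Bigr)^{\!2}\,\mathbb{1}\!\bigl[\,|g(w)-g(v)|\leq R\,s(v)\,\bigr],
\end{equation*}
which depends only on the doubly rooted embedded graph. Evaluated at $v=o$, the area estimate above gives $\sum_w F(o,w)\leq 9R^2$ deterministically, and the mass transport identity $\mathbb{E}[\sum_w F(o,w)]=\mathbb{E}[\sum_w F(w,o)]$ then forces $\mathbb{E}[\sum_w F(w,o)]\leq 9R^2$ for every $R$. I would exploit this bound, together with the geometry of the embedding on the event of two accumulation points, to derive the contradiction: following \cite{BS}, two accumulation points $a_1\neq a_2$ partition the small-scale vertices into two infinite basins lying in spatially separated regions of $\C$, and a graph-invariant selection rule sending each small-scale vertex to a canonically chosen macro-scale target in the separating region produces a transport whose out-mass is again bounded by a planar area argument, but whose in-mass at the targets is infinite on the bad event, violating mass transport.

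\textbf{The main obstacle} is the construction of this refined invariant selection. In \cite{BS}, the ring lemma for bounded-degree circle packings locates the target combinatorially; here the vertex degrees can be unbounded, so the target must be read off from the planar packing $\{D(v)\}$ and the embedding $g$ alone. Showing that a graph-invariant selection exists, and that it concentrates infinite in-mass precisely on the event of two or more accumulation points in $\C$ (rather than spreading mass arbitrarily thinly or defaulting to trivial targets), is the principal technical point, and is exactly where the adaptation of the Benjamini--Schramm argument from the circle-packing setting to the general embedding setting requires care.
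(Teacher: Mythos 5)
Your route (unimodularity/mass transport inherited from the uniform rooting) is genuinely different from the paper's, but it contains a genuine gap, and it is the one you flag yourself. The preliminaries are fine: the packing by the discs $B(g(v),s(v)/2)$, the area bound $\sum_{|g(w)|\leq R}s(w)^2\leq 9R^2$, and the transport $F(v,w)$ (whose similarity-invariance is indeed guaranteed by the hypothesis that embeddings of the same unrooted graph are similar) all hold. But none of this contradicts anything yet; the contradiction is supposed to come from the ``graph-invariant selection rule'' concentrating infinite in-mass precisely on the two-accumulation-point event, and that construction is missing, not merely delicate. The difficulty is structural: any such transport must yield infinite in-mass when there are two or more accumulation points, yet finite expected in-mass when there is \emph{exactly one} --- and a single accumulation point also attracts infinitely many small-scale vertices, so a naive rule of the form ``send each small vertex toward the region where it clusters'' already has infinite in-mass in the one-point case, which is the case compatible with the conclusion and hence cannot be ruled out. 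Tolerating one free accumulation point is the entire content of the problem; in the paper's source it is absorbed into the definition of $(\delta,s)$-supported points via the infimum over excised discs $D(p,\rho_v\delta)$ in Lemma 2.3 of \cite{BS} (Lemma A), whose proof is itself a nontrivial multiscale counting argument, not a soft consequence of mass transport. Your picture of ``two infinite basins lying in spatially separated regions'' is also unjustified: the accumulation set can be a continuum, and vertices clustering near $a_1$ versus $a_2$ admit no canonical isomorphism-invariant separation.

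For comparison, the paper's proof (written out for the tailored variant, Proposition \ref{parabolic-limit}, and asserted to be identical for Proposition \ref{BSLemma}) is short and uses no mass transport: if two accumulation points occur with probability $\alpha>0$, choose $\delta$ and $p_1,p_2$ with $|p_1-p_2|\geq 3\delta$ so that with probability at least $\alpha/2$ the event $\mathcal{O}_{k_s}$ holds that each of $B(p_1,\delta)$, $B(p_2,\delta)$ contains at least $s$ embedded points coming from the combinatorial ball $B_{k_s}$; this event is open in the metric $d_\mathcal{E}$, so distributional convergence gives $\liminf_n \P[\,\cdot\in\mathcal{O}_{k_s}]\geq\alpha/2$; on $\mathcal{O}_{k_s}$ the (normalized) root is a $(\delta',s)$-supported point of the finite set, and since the root is uniform over the graph, Lemma A bounds this probability by $c(\delta)/s$, a contradiction for $s$ large. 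So to complete your argument you would in effect have to reprove Lemma A or an equivalent invariant substitute; the economical fix is to quote Lemma 2.3 of \cite{BS} directly, at which point your packing and transport preliminaries become unnecessary.
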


Notice that in Proposition \ref{BSLemma}, no assumption is made on the graphs
$(G_n,o_n,g_n)$ apart from 
distributional convergence. Particularly, it is not assumed that the graphs are planar. 
However, for our application we have to work with the dual graph of the random triangulation,
and embeddings of (centers of) faces rather than vertices. For the reason of clarity, in Section 2.3, Proposition \ref{parabolic-limit}, 
we state and prove a version of Proposition \ref{BSLemma} that is tailored to our applications.
The proofs are identical and are basically in \cite{BS}.
Theorem \ref{main} is a consequence of Proposition \ref{parabolic} and (the variant of)
Proposition \ref{BSLemma}. In order to apply Proposition \ref{BSLemma}, we need to prove
existence of the limit $(G,o,g)$. An easy way to do this is by proving compact convergence
of the conformal maps $\phi_n$ of the Riemann surfaces $R(T_n),$ which follows from Montel's theorem.
To obtain Corollary \ref{coruipt}, we approximate the UIPT by an unbiased sequence of disc triangulations, simply by removing a randomly chosen triangle. See Section 3.1 for details. Finally, to prove Corollary \ref{corneck}, we show that the boundary of the random disc
with $n$ triangles has size about $\sqrt n$ (Lemma 3.5), so that the distance of the root to the boundary 
stays bounded with probability about $1/\sqrt n$ (Lemma 3.6).

\vspace{.1in}

\noindent{\bf Acknowledgements:} The authors would like to thank Itai Benjamini and Nicolas Curien for helpful comments on a draft of this paper.

\section{Packings, triangulations, and Riemann surfaces}

In this section, we will describe the Riemann surface associated
with a triangulation via glueing of equilaterals, and provide
a simple criterion for parabolicity of such a surface.

\subsection{Triangulations}\label{S:2.1}

We adopt the terminology and definitions of the important papers \cite{BS}, \cite{AS}
on random planar triangulations. Here we restrict ourselves to very 
briefly listing and explaining  the key terms, and refer the reader to
the well-written and nicely illustrated Section 1.2 in \cite{AS} 
for motivation and details.

An {\it embedded triangulation} $T$ consists of a finite or infinite
connected graph $G$ embedded in the sphere $\CC$, together with a subset of the triangular faces (=connected components of $\C\setminus G$ whose boundary meets precisely three edges of $G$). 
We will drop the term {\it embedded}, and we will always assume that the triangulation is {\it locally finite}, that is, every point in the {\it support} $S(T)$
(= the union of $G$ and all the triangles in $T$) has a neighborhood in $\CC$ that intersects only a finite number of elements of $T.$
Two triangulations $T$ and $T'$ will be considered {\it equivalent} if there is
an orientation preserving homeomorphism of their supports corresponding $T$ and $T'$. See Definitions 1.1 and 1.5 in \cite{AS}.
There are different types of triangulations, according to what types of graphs we allow. We will only consider {\it type II triangulations} where $G$ has no loops but possibly multiple edges, and {\it type III triangulations} where $G$ has neither loops nor multiple edges (Definitions 1.2 and 1.3 in \cite{AS}).
A {\it rooted triangulation} $(T,o)$ is a triangulation $T$ together with an
oriented triangular face $o=(x,y,z)$ of $T,$ called the {\it root}. The vertex $x$ is the 
{\it root vertex} and $(x,y)$ is the {\it root edge}.

We call $T$ a {\it disc trianguation} if the support is simply connected. For finite triangulations, this implies that the complement
$\CC\setminus S(T)$ is an $m-$gon for some $m,$ so that $T$ is a triangulation of an 
$m-$gon. A vertex $v$ is an {\it interior} resp. {\it boudary vertex} if $v$ 
belongs to the interior resp. boundary of $S(T).$ Denote $\partial T$ the set of 
boundary vertices (which is empty if $S(T)$ is open).

Denote $d_{gr}(v_1,v_2)$ the {\it graph distance} on $T$ and
write $d_{gr}(v,\partial T)$ for the minimal graph distance between $v$ and
any point $v'\in \partial S(T)$ (if there is no such $v'$, we set 
$d_{gr}(v,\partial T)=\infty$).
Define the {\it combinatorial distance} between two rooted triangulations
$(T,o)$ and $(T',o')$ as 
$$d_c((T,o),(T',o')) = 1/(k+1)$$
where $k$ is the largest radius $r$ for which the combinatorial balls $B_r$ and
$B'_r$ around the root vertices $v,v'$ are equivalent, where equivalency now also requires that the root is preserved. 
(The inductive definition of $B_r$, given in \cite{AS}, Definition 4.3, is that $B_0$
is the root vertex and $B_{r+1}$ consists of all triangles incident to a vertex of $B_r$,
together with all vertices and edges of these triangles).

We will be mostly dealing with disc triangulations and their limits. 
In general, it is not true that limits of disc triangulations are disc triangulations. 

\begin{lemma} If $(T_n,o_n)$ is a sequence of disc triangulations converging to a triangulation $(T,o)$ in $d_c,$ if $d_{gr}(o_n,\partial T_n)\to\infty$, and if $T$ has one end, then $T$ is a disc triangulation.
\end{lemma}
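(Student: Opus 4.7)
The plan is to establish that $S(T)$ is homeomorphic to $\R^2$, hence simply connected.

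First, I argue that $\partial T = \emptyset$. Fix a vertex $v$ of $T$ at graph distance $R$ from the root vertex of $o$. For $n$ large enough, $d_c$-convergence gives $B_{R+2}(o_n,T_n) = B_{R+2}(o,T)$ as rooted combinatorial balls, while the hypothesis $d_{gr}(o_n,\partial T_n)\to\infty$ ensures $d_{gr}(o_n,\partial T_n) > R+2$. The corresponding vertex $v_n\in T_n$ is therefore interior to the disc $S(T_n)$, so the triangles of $T_n$ around $v_n$ form a closed cycle. Since this cyclic link is encoded inside $B_{R+2}(o_n)$, it also describes the link of $v$ in $T$. Hence every vertex of $T$ is interior, and $\partial T=\emptyset$.

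Combined with local finiteness, this shows that each point of $S(T)$ has a disc neighborhood, so $S(T)$ is a $2$-manifold without boundary. It is connected (the graph of $T$ is connected by definition) and non-compact (from $d_{gr}(o_n,\partial T_n)\to\infty$ one has $|T_n|\to\infty$, and $T$ contains all finite combinatorial balls about $o$). Orientability is inherited from the embeddings $T_n\subset\CC$ together with the fact that equivalence of triangulations is by orientation-preserving homeomorphisms, so the limit is orientable as well.

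Next I claim $S(T)$ is planar, i.e.\ embeds in $S^2$. Any finite subcomplex $K$ of $S(T)$ lies in some ball $B_R(o)$; for $n$ large, $B_R(o)=B_R(o_n)$ sits inside $S(T_n)\cong\overline{\D}\subset S^2$, so $K$ embeds in $S^2$. A standard compactness argument for planarity of countable $2$-complexes (or a diagonal extraction of subsequential limits of the embeddings of $B_R(o)$ into $S^2$) then produces an embedding of all of $S(T)$ into $S^2$. By the classification of non-compact orientable surfaces, a connected, orientable $2$-manifold without boundary that is non-compact, embeds in $S^2$, and has exactly one end is homeomorphic to $\R^2$. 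Hence $S(T)\cong \R^2$ is simply connected, and $T$ is a disc triangulation.

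The main obstacle is the planarity step: we must cross from ``every finite subcomplex embeds in $S^2$'' to ``$S(T)$ embeds in $S^2$''. If a general planarity-compactness theorem for countable $2$-complexes is not invoked directly, a direct alternative is an intersection-number argument ruling out positive genus: any handle in $S(T)$ would yield two simple closed curves with intersection number one that, for $R$ large, both lie inside $B_R(o)=B_R(o_n)\subset S(T_n)\cong\overline{\D}$, a topological disc, forcing intersection number zero --- a contradiction. Combined with orientability and one-endedness, ruling out every positive genus this way again gives $S(T)\cong\R^2$.
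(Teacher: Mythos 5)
The paper never writes out a proof of this lemma --- it is explicitly left as an exercise, with the cylinder examples serving only to show the hypotheses are all needed --- so your proposal is compared against the intended folklore argument rather than a written one. Your argument is essentially correct, and its first step is exactly the right use of the two limit hypotheses: $d_c$-stabilization of $B_{R+2}$ together with $d_{gr}(o_n,\partial T_n)\to\infty$ transfers the closed vertex link from $T_n$ to $T$, giving $\partial T=\emptyset$, and hence that $S(T)$ is a surface without boundary; noncompactness, incidentally, follows already from the one-end hypothesis (a finite complex has no ends), so you do not need the $|T_n|\to\infty$ remark.

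Two comments on the rest. First, the step you single out as the ``main obstacle'' --- planarity --- is a non-issue in this paper's framework: by the definitions of Section 2.1 (following \cite{AS}), a triangulation is by definition a graph \emph{embedded in the sphere} $\CC$ together with a set of faces, and the statement of the lemma assumes the $d_c$-limit $(T,o)$ \emph{is} a triangulation; hence $S(T)\subset\CC$ from the outset, and once $\partial T=\emptyset$ it is an open connected subset of the sphere by invariance of domain, so genus zero is automatic and the classification of noncompact surfaces (orientable, genus zero, one end, no boundary) gives $S(T)\cong\R^2$ directly. Second, the route you propose as primary --- ``a standard compactness argument for planarity of countable $2$-complexes,'' or a diagonal extraction of embeddings of the balls $B_R(o)$ into $S^2$ --- should not be waved through: embeddability of infinite graphs and complexes is not a naive finite-character property (for graphs one needs a Dirac--Schuster type theorem, and for locally finite, accumulation-free embeddings one needs Halin-type results), and a pointwise subsequential limit of embeddings need not be injective without additional normalization. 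Fortunately your fallback argument is complete and correct: positive genus would produce two simple closed curves of algebraic intersection number one inside a compact subsurface, hence inside some $B_R(o)=B_R(o_n)\subset S(T_n)$, a topological disc, where the intersection pairing vanishes --- a contradiction. With that intersection-number argument (or with the observation that planarity is definitional here) replacing the compactness claim, your proof is sound.
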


We leave the proof of the Lemma as an exercise to the reader. None of the assumptions of the Lemma
can be omitted: To see this, consider appropriate sequences of triangulations of the cylinders $S^1\times [0,n]$ and $S^1\times[-n,n]$,
for instance by restricting the planar triangular lattice to a strip.
Both are disc triangulations converging to  triangulations of  punctured discs (namely a half-infinite and a bi-infinite cylinder).  
The first limit has one end but $d_{gr}(o_n,\partial T_n)\not\to\infty$, the second limit has two ends 
(though $d_{gr}(o_n,\partial T_n)\to\infty$).

\subsection{Riemann surfaces}\label{S:2.2}

With each triangulation we  associate a metric space $(M(T),d_M)$, by regarding each triangle of $T$ as an equilateral triangle 
and defining the distance between two points as the length of the shortest path joining them. In other words, $M(T)$ is obtained by glueing equilateral triangles according to their adjacency pattern in $T.$

This metric space can be equipped with a compatible Riemann surface structure 
by defining coordinate charts as follows. If $x$ is an interior point of a triangle, simply use that triangle as the chart, view it as a subset of the plane, and use the identity map as the projection. Similarly, if $x$ is interior to an edge, simply place the two adjacent triangles in the plane 
and again use the identity as the projection.
(If $x$ is in the boundary of the support, there will be only one triangle and we obtain a bordered Riemann surface). 
Finally, if $x$ is a vertex of degree $n,$ unless $n=6$
we cannot place the triangles in the plane so that they form a 
neighborhood of $x.$ Nevertheless, we take for the chart
% the {\it flower} $F_x$ which is 
the union of $x$ with the edges and faces adjacent to $x$. Loosely speaking, we set $x=0$ and take the map 
$z\mapsto z^{6/n}$ as the coordinate map. More precisely,
if $\Delta_1,\Delta_2,...,\Delta_n$
denote the equilaterals which meet at $x$ in cyclic order, 
we may place them in the plane so that $\Delta_j$ is the triangle 
with vertices $0, e^{2\pi i(j-1)/6}, e^{2\pi i j/6}$. Then we define
the coordinate map
for $z=r e^{it}\in \Delta_j$ by $\phi(z)=r^{6/n} e^{6 t i/n},$ using the convention
that $\frac{2\pi (j-1)}{6}< t < \frac{2\pi j}{6}$ on $\Delta_j$.
Finally, we set $\phi(0)=0$ and notice that $\phi$ is a homeomorphism 
between a neighborhood of $x$ in $M$ and a neighborhood of $0$ in $\C.$
It is obvious that coordinate changes are analytic, so that this indeed defines a
Riemann surface.

Following \cite{BS} (but using triangles instead of vertices for roots, as in \cite{AS}),
we call a random rooted triangulation $(T,o)$ {\it unbiased} if
the law $\mu$ of $(T,o)$ is in the closed convex hull of the measures $\mu_H$ which
are defined for finite triangulations $H$ as the uniform measure on rooted triangulations
$(H,o)$ where $o$ ranges over all possible 
triangles of $H.$ Informally, $(T,o)$ is unbiased if given $H$ the root $o$ is uniformly distributed
among the oriented triangles.

We now state our parabolicity criterion for Riemann surfaces of random triangulations.
It is the Riemann surface analog to Theorem 1.1 of \cite{BS}, the recurrence of 
simple random walk on distributional limits of unbiased finite planar graphs with uniformly
bounded degrees.
Notice that we allow unbounded degrees, and that we {\it assume} existence of a subsequential limit.

\newtheorem*{th1}{Theorem \ref{main}}
\begin{th1}
Suppose that $(T,o)$ is a subsequential distributional limit of a sequence $(T_n,o_n)$ of random rooted finite unbiased disc triangulations (limit with respect to the topology induced by the combinatorial distance $d_c$ of triangulations). 
Suppose further that $(T,o)$ has one end almost surely, 
and that $d_{gr}(o_n,\partial T_n)\to \infty$ in law.

Then the Riemann surface $R(T)$ is parabolic a.s.
\end{th1}
Put differently, under the assumptions of the theorem, Brownian motion on any distributional limit Riemann surface
is recurrent a.s. If we omit the assumption $d_{gr}(o_n,\partial T_n)\to \infty,$ the limiting Riemann surface could have a boundary. In this case, reflecting Brownian motion would still be recurrent.
The crucial condition on unbiasedness can be somewhat weakened. For instance, it would be enough to assume that,
given a finite triangulation $T$, for ``most" possible roots $o$ the probility that $o_n=o$ is comparable
to the reciprocal of the number of faces of $T.$ Indeed, our proof will go through without modifications if unbiasedness is replaced by the existence of a constant $C$ such that for each $\eps>0$ there is $n_0$ such that for $n\geq n_0$,
if $U_n(T)$ denotes the set of those directed triangles $o$ of $T$ for which
$$\frac1{C|F(T)|} \leq \P[o_n=o | T_n=T] \leq \frac{C}{|F(T)|},$$
then 
$$\P[o_n\in U(T) |  T_n=T] \geq 1-\eps.$$
The recurrence proof \cite{BS} used circle packings and was based on the following result from \cite{HS}:
If a collection of discs $D_v\subset\C$ with pairwise disjoint interiors has no accumulation point in the plane,
and if its tangency graph $G$ has uniformly bounded vertex degree, then $G$ is recurrent.
We will proceed similarly, by associating with each triangulation a packing of topological discs in the plane. 
One of the differences, neccessitated by our need to work with unbounded degrees, is that we work with
a packing of the dual graph rather then the original graph (in circle packings, this would correspond to the 
collection of interstices rather than the collection of discs). 
First some definitions:  
Let $v$ be a vertex of $T$ of degree $\deg v=n$.  The {\em flower} $F_v=F_v(T)\subset R$
of $v$ is the union of the $n$ equilateral triangles 
$(v,v_i,v_{i+1})$ which meet at $v$, together with the edges. 
Similarly, the {\em half-flower} $H_v$ 
is the $n$-gon formed by the $\deg(v)$ equilateral triangles 
$(v,v_i',v_{i+1}')$, where $v_i'$ is the midpoint between $v$ and $v_i,$
see Figure \ref{fig:flower}. The {\it interstice} $I_f$ of a triangular
face $f$ with vertices $(v_1,v_2,v_3)$ 
is the equilateral sub-triangle whose vertices are
the midpoints of the edges, see Figure \ref{fig:flower2}. We denote the {\it center} of a face $c_f$.
The interstices and the half-flowers form a decomposition of the surface. The name is borrowed from the theory of circle packings (see \cite{St} or \cite{R}).

\begin{figure}[htp]
\centering
\includegraphics[scale=0.40]{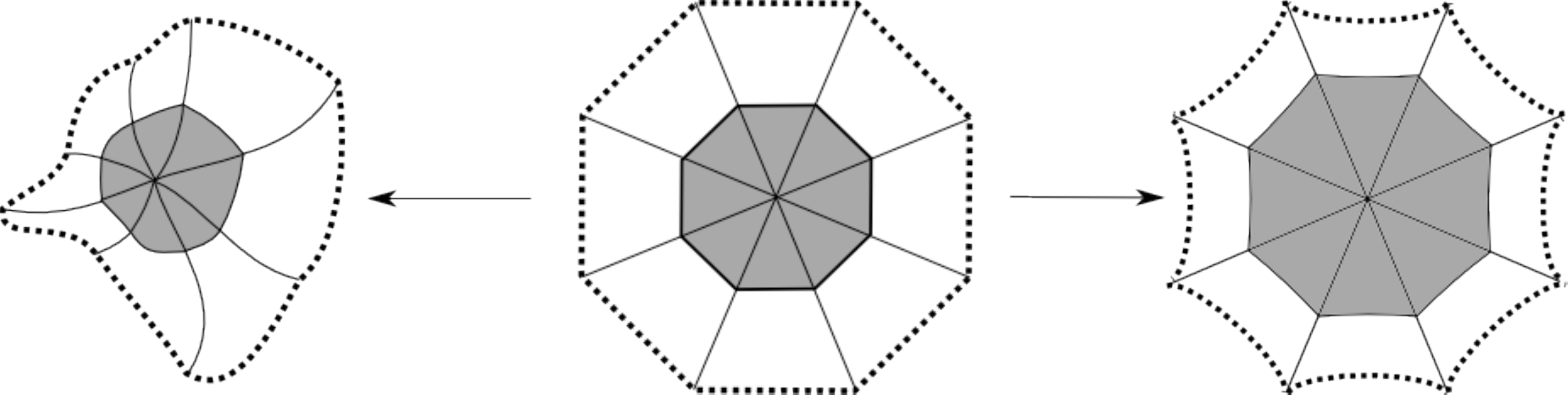}
\caption{The flower, half flower (shaded), and images under a uniformizing map and under the coordinate map}\label{fig:flower}
\end{figure}

\begin{figure}[htp] 
\centering
\includegraphics[scale=0.6]{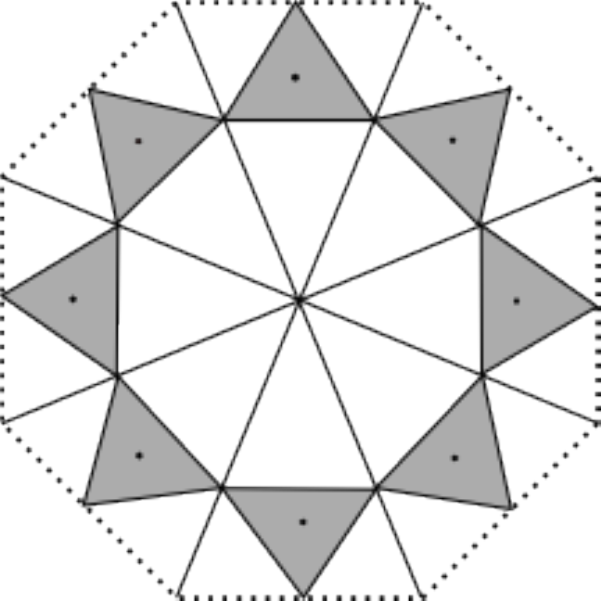}
\caption{The same flower as in \ref{fig:flower} but with the interstice triangles shaded.}\label{fig:flower2}
\end{figure}

For the remainder of this section, we assume that $T$ is a disc triangulation so that $R$, the Riemann surface associated with $T$, is simply connected. By the Koebe uniformization theorem, there is a conformal homeomorphism into the sphere $\CC.$ The following simple criterion for parabolicity is key to our approach.

\newtheorem*{prop1}{Proposition \ref{parabolic}}
\begin{prop1}
Let  $\phi$ be a conformal map of $R$ into $\CC$. 
Then $R$ is parabolic if and only if the following two conditions are satisfied:
First, the sequence of centers of interstices $\phi(c_f)_{f\in F(T)}$ has precisely one accumulation point in $\CC.$ And second, $d_{gr}(v,\partial T)=\infty$
for one (hence all) vertices $v$.
\end{prop1}

\begin{remark}\label{rmk}
{\rm
 The proposition also holds if we replace the condition of ``precisely one accumulation point in $\CC$'' with`` finitely many accumulation points in $\CC$''.  This variant with finitely many accumulation points is what we will eventually use in the proof of Theorem \ref{main}.
}
\end{remark}

\vspace{.5cm}

The condition on the centers of interstices can be replaced by the condition that the sequence of interstices $\phi(I_f)_{f\in F(T)}$ has at most one accumulation point in $\CC.$ 
We believe that the condition is also equivalent to requiring
that the half-flowers, or their centers, have only one accumulation point. 
This would be the most direct analog of the notion of ``Circle Packing parabolicity" of He and Schramm, \cite{HS}, and the aforementioned approach in \cite{BS}.
Since we do not assume the degrees of the vertices to be bounded, we have no control over the geometry of the half-flowers. Indeed, it is easy to see that the hyperbolic diameter 
of the half-flowers within the combinatorial ball $B_1$ tends to infinity as the degree of the root vertex tends to infinity.
To see what could happen when no bounded geometry assumption is made,  consider Figure \ref{fig:packingwithlotsoflimitpoints}.
\begin{figure}[htp]
\centering
\includegraphics[scale=0.40]{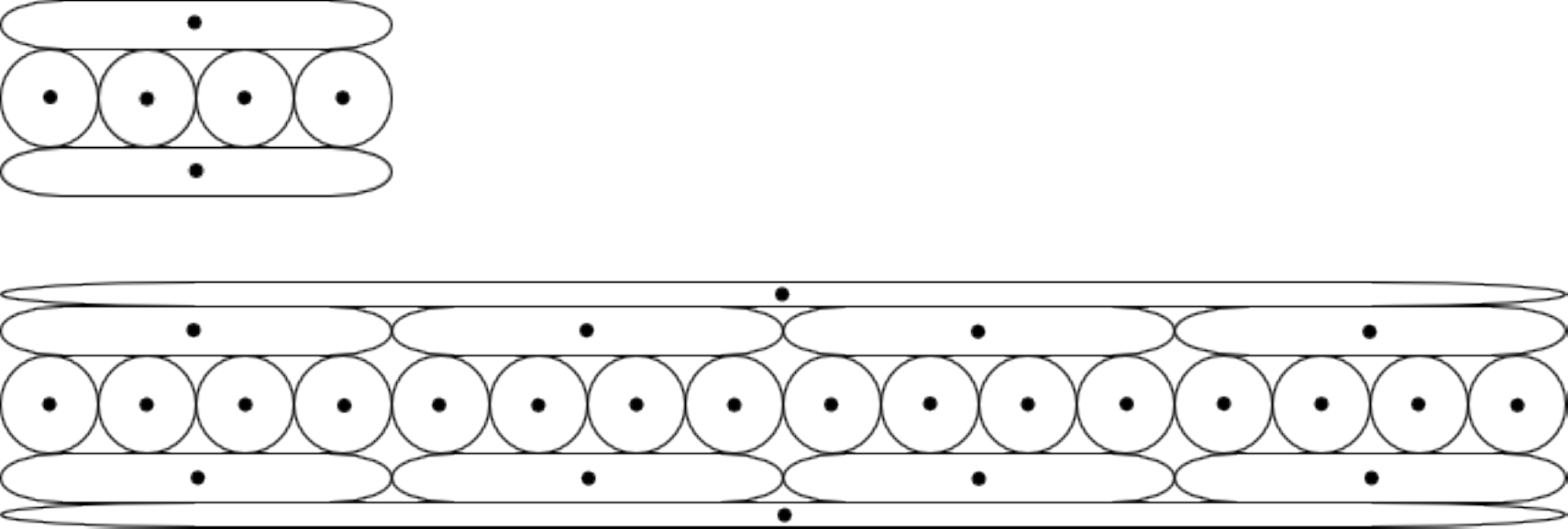}
\caption{A packing of a hyperbolic graph by sets of unbounded geometry}\label{fig:packingwithlotsoflimitpoints}
\end{figure}
It shows the first two steps of an iterative construction of a (deterministic) packing of a hyperbolic (transient) graph
(it is similar in spirit to the image of the Figure \ref{fig:lattice} under the homeomorphism $z\mapsto z/(1-|z|)$ between the unit disc and the plane).
The packing has two infinite lines of accumulation, but it is easy to associate with each of the packed sets a point ``well within" the set 
such that this collection of points has no accumulation point in the plane.  
The problem is that the points no longer properly ``represent'' the topological discs because there 
is no uniform assumption on the geometry of the discs. We get around this problem by considering the interstices.
Notice that in Figure \ref{fig:packingwithlotsoflimitpoints} the interstices do not have bounded geometry.
Our proof of the Proposition crucially relies on the fact that the Riemann surface interstices have bounded geometry, independently of the degrees of the vertices:

\begin{lemma}\label{bddgeometry} 
There is a constant $C$ such that  for all
simply connected open $R$ as above, all faces $f$ of $R$ with $d_{gr}(f,\partial R)\geq2$,
and all conformal maps $\phi:R\to\C$,
\begin{equation}\label{geometry}
B(\phi(c_f), \frac1C \diam \phi(I_f)) \subset \phi(I_f) \subset B(\phi(c_f), C \diam \phi(I_f)).
\end{equation}
If $f'$ is a face adjacent to $f,$ then
\begin{equation}\label{centergeom}
\frac1C\leq  \frac {|\phi(c_f)-\phi(c_{f'})|}{\diam \phi(I_f)} \leq C.
\end{equation}
If furthermore $R$ is hyperbolic, then the hyperbolic diameter of $I_f$ satisfies
\begin{equation}\label{hypdiam}
\diam_{hyp}(I_f)<C.
\end{equation}
\end{lemma}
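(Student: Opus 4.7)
The plan is to construct, in a universal way, a flat planar chart around each interstice and then invoke classical conformal distortion estimates in that chart. The crucial observation is that, although the degrees of the vertices (and hence the local geometry of the full flowers) can be unbounded, the combinatorial neighborhood consisting of a face $f$ together with its three adjacent faces always has the \emph{same} conformal structure, independent of the vertex degrees in $T$.

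Concretely, let $U_f\subset R$ denote the union of $f$ with its three face-neighbors $f_1,f_2,f_3$. Of the $\deg(v_i)$ triangles meeting at each vertex $v_i$ of $f$, precisely three lie in $U_f$ (namely $f$ and the two $f_j$ sharing an edge with $f$ at $v_i$), and they occupy a consecutive $180^\circ$ wedge of the cone at $v_i$. Isometric unfolding of the four triangles into the plane therefore realises the interior $U_f^\circ$ as conformally equivalent, via a natural map $\iota$, to the open interior of a planar equilateral triangle $\tilde U_f$ of side length $2$, in which $v_1,v_2,v_3$ appear as the midpoints of the three sides. Under $\iota$, the face $f$ becomes the medial triangle of $\tilde U_f$ and $I_f$ becomes the central medial sub-triangle; both occupy fixed positions inside $\tilde U_f$ independent of $f$. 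The hypothesis $d_{gr}(f,\partial R)\geq 2$ ensures that every face and vertex appearing in this picture is an interior one, so the Riemann-surface structure of $U_f^\circ$ really does agree with the flat structure of $\tilde U_f$ and $\iota$ is a genuine conformal equivalence. Because $\phi$ is univalent on $R$, the composition $\tilde\phi:=\phi\circ\iota^{-1}$ is univalent and holomorphic on the fixed planar domain $\tilde U_f$, and $I_f$ sits in a fixed compact subset of $\tilde U_f$ at universally bounded hyperbolic distance from $\partial\tilde U_f$.

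From this point the three conclusions follow routinely. The classical Koebe distortion estimates (conjugated by a Riemann map $\D\to\tilde U_f$) show that $|\tilde\phi'|$ varies by at most a universal constant over $I_f$, and the Koebe $1/4$ theorem applied to a suitable normalisation of $\tilde\phi$ produces the two-sided Euclidean inclusion \eqref{geometry}. For \eqref{centergeom}, any face $f'$ adjacent to $f$ satisfies $I_{f'}\subset f'\subset U_f$, so $I_f$ and $I_{f'}$ both lie in a single universally controlled compact subset of $\tilde U_f$; the same distortion bound applied across this larger region yields the desired two-sided comparison between $|\phi(c_f)-\phi(c_{f'})|$ and $\diam\phi(I_f)$. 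Finally, \eqref{hypdiam} comes from Schwarz-Pick applied to the conformal inclusion $U_f^\circ\hookrightarrow R$: when $R$ is hyperbolic, the hyperbolic metric of $R$ restricted to $U_f^\circ$ is dominated by the hyperbolic metric of $\tilde U_f$, in which $I_f$ has a universally bounded diameter.

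The one genuinely substantive point, and the place I expect to spend the most care, is verifying that the unfolding $\iota:U_f^\circ\to\tilde U_f$ is really a conformal equivalence. This reduces to checking that the three triangles meeting at each $v_i$ inside $U_f$ unfold without overlap to fill exactly a $\pi$-wedge — a direct consequence of the identity $3\cdot 60^\circ=180^\circ$, and the one place where the \emph{uniform} truncation of the cones (independent of $\deg(v_i)$) is essential. Once $\iota$ is in hand, the rest of the argument works precisely because $\tilde U_f$ and the position of $I_f$ within it form the same planar configuration for every $f$, regardless of the degrees of the vertices or any other combinatorial complexity of $T$.
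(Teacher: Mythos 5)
Your overall strategy is the same as the paper's: the paper also passes to the union $U$ of $f$ with the faces sharing an edge with $f$, observes that $I_f$ and the segment $[c_f,c_{f'}]$ are compact subsets of a domain whose conformal type does not depend on the ambient triangulation, and then derives \eqref{geometry} and \eqref{centergeom} from the Koebe distortion theorem and \eqref{hypdiam} from monotonicity of the hyperbolic metric under inclusion (Schwarz's lemma). Your explicit flat unfolding of $U_f$ onto an equilateral triangle of side $2$, with the $3\cdot 60^\circ=180^\circ$ wedge count at each $v_i$, is an attractive concrete realization of the ``fixed conformal type'' step that the paper leaves implicit.

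There is, however, a genuine gap: the paper works with type II triangulations, in which multiple edges are allowed (this is the relevant setting for the UIPT), and your universality claim fails in that case. If $f$ is adjacent to the \emph{same} face $f'$ along two of its edges, the vertex $v$ shared by those two edges has $\deg v=2$, so the cone angle at $v$ is $2\pi/3$; then $U_f$ consists of only three triangles, $v$ is an \emph{interior} cone point of $U_f$, and $U_f^\circ$ does not unfold isometrically into the plane --- precisely the identity $3\cdot 60^\circ = 180^\circ$ on which you say the argument hinges is violated at $v$. The paper explicitly singles out this configuration (``in the presence of double edges $U$ consists of three triangles''). The repair is that one does not need a single flat model: there are only finitely many possible conformal types of $U_f$, in each of which $I_f$ and $[c_f,c_{f'}]$ occupy a fixed compact position, and the distortion and monotonicity arguments go through with a uniform constant. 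Two smaller slips in the same spirit: (i) when $\deg v_i=3$ for a vertex of $f$, the interior of $U_f$ in $R$ strictly contains the unfolded open triangle (it picks up $v_i$ as an interior cone point of angle $\pi$), so ``$U_f^\circ\cong\tilde U_f$'' is not literally correct --- harmless, since the injectively unfolded open triangle serves as a subdomain chart and both Koebe and Schwarz--Pick only need a subdomain; and (ii) $I_{f'}$ is \emph{not} contained in a compact subset of $\tilde U_f$, since two of its vertices are midpoints of edges of $f'$ lying on $\partial\tilde U_f$; fortunately \eqref{centergeom} only requires the points $c_f,c_{f'}$ and the segment between them, which do sit compactly inside, so this step should be rephrased rather than abandoned.
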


\begin{proof}
Consider the union $U$ of 
$f$ with the triangles sharing an edge with $f$. Then $U$ consists of either four triangles, or (in the presence of double edges) $U$ consists of three triangles. In both cases, the interstice $I_f$ and the line segment $[c_f,c_{f'}]$ are compact subsets of $U$ 
and thus of finite hyperbolic diameter within  $U$.
If $\psi$ is a conformal map of $U$ onto the unit disc, such that $c_f$ maps to $0,$ then \eqref{geometry} and
\eqref{centergeom} are trivially satisfied since there are only two such maps $\psi.$
Now standard applications of the Koebe distortion theorem gives \eqref{geometry} and \eqref{centergeom}.
Since the hyperbolic distance decreases
as the domain increases (essentially Schwarz' Lemma), $I_f$ and $[c_f,c_{f'}]$ have uniformly
bounded hyperbolic diameter independent of the triangulation. This proves \eqref{hypdiam}.
\end{proof}

\begin{proof}[Proof of Proposition \ref{parabolic}]
Since $R$ is not conformally $\CC,$ we may assume that $\phi(R)\subset\C.$ 

If $R$ is parabolic, then $d_{gr}(v,\partial T)=\infty$, for otherwise there
would be a boundary edge at finite combinatorial distance from $v$ and Brownian motion would be transient. Also, in that case $\phi$ is a proper homeomorphism between $R$ and $\C$ so that $\infty$ is the only limit point.

To show the converse, assuming that $R$ is hyperbolic and that 
$d_{gr}(v,\partial T)=\infty$ we will show that the sequence $\phi(c_f)$ has infinitely many accumulation points.
Pick a point $w\in\partial \phi(R)$ (boundary with respect to the euclidean metric) such that there is a curve $\gamma$ in $\phi(R)$ joining $\phi(v)$ and
$w.$ Since $\phi(R)$ is hyperbolic, $\partial \phi(R)$ is non-empty, connected, and the set of such $w's$ is dense in $\partial \phi(R)$ 
(for instance, the points in  $\partial \phi(R)$ closest to some interior point will do).
The curve $\phi^{-1}(\gamma)$ eventually leaves every compact set and therefore has to meet infinitely many interstices
$I_1,I_2,...$ In particular, there is a sequence of points $z_j\in I_j$ such that 
$\phi(z_j)\to w$.
By  \eqref{hypdiam} and the Koebe distortion theorem, it follows that $\diam \phi(I_f)\leq C' |w-\phi(z_j)|$ so that
$\phi(c_j)\to w$ as well.
\end{proof}

\subsection{Center Embeddings}

In this section, we develop a  variant of the Benjamini-Schramm technique \cite{BS}, suitable for our needs.
See \cite{BC} for a different (but related) variant. One of the key ideas in \cite{BS} was to show that the centers
of the circle packings associated with a distributional limit of unbiased random triangulations of the sphere
have only one limit point almost surely. In case of uniformly bounded vertex degrees, this implied recurrence
of the simple random walk, by \cite{HS} and \cite{McC}.

With  our parabolicity criterion Proposition \ref{parabolic} in mind (and thinking of the Riemann surface interstice-packing as a substitute for the circle packings), we are interested in
finite or countably infinite sets of points $\phi(c_f)_{f\in F(T)},$ together with the
combinatorics of the triangulation. We thus make the following 
\begin{defn}
A {\bf center embedding} is a triple $E=(T,o,g)$, where $(T,o)$ is a rooted triangulation and 
$g:F(T)\to\C$ an injective map. We also require $g$ to be normalized by $g(o)=0$ and such that
the closest point is of distance 1,
\begin{equation}\label{cenorm}
\inf_{f\neq o} |g(f)| = 1.
\end{equation}
\end{defn}
We equip the space $\mathcal{E}$ of normalized center embeddings with a topology in such a way that
$$(T_n,o_n,g_n)\to (T,o,g)$$
if and only if both $(T_n,o_n)\to (T,o)$ in the graph metric, 
and $g_n(f)\to g(f)$ for each face $f\in T$. 

Writing $d=d_{gr}((T,o),(T',o')),$
this topology is easily seen to be generated by the metric
\[
d_\mathcal{E}((T,o,g),(T',o',g')) =  d +
\sum_{n = 0}^{d^{-1}} 
\frac1{2^{n+1}|B_n|}
\sum_{f \in B_n} \frac{ |g(f)-g(f')| }{1+|g(f)-g(f')|}, 
\]
where $B_n$ denotes the ball of radius $n$ in $T$, $|B_n|$ denotes the number of faces of $B_n$, and 
where $f'$ denotes the face in $T'$ corresponding to $f\in T$.
It is not hard to see that $(\mathcal{E},d_\mathcal{E})$ is separable.
Slightly abusing standard terminology, we call two functions $g$ and $g'$ {\it similar} 
if $g'= a g+b$ with real $a>0$ and complex $b$.
Thus each function $g:F(T)\to\C$ is similar to a normalized one. 

\begin{prop}\label{parabolic-limit}
Suppose that $E_n$ is a sequence of random unbiased finite center embeddings such that
the (unrooted) triangulation $T_n$ determines the embedding (in the sense that if 
$T$ and $T'$ are equivalent as planar graphs, then the embeddings $g$ and $g'$ are similar).
Then for every distributional subsequential limit $E$ of $E_n$,
the (random) set $g(F(T))$ has at most one limit point almost surely.
\end{prop}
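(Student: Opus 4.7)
The plan is to adapt the proof of Lemma~2.3 and Proposition~2.2 of \cite{BS} from vertex circle packings to center embeddings of faces, as the paper indicates. I will argue by contradiction: assume that with probability $p > 0$ the random set $g(F(T))$ has at least two accumulation points in $\CC.$ Since at most one such point can equal $\infty$ and the normalization forces $|g(f)| \geq 1$ for every $f \neq o,$ at least one accumulation point lies in the closed annulus $\{|z| \geq 1\} \cap \C.$ By continuity of probability I choose deterministic $R > 1$ and $K \in \NN$ such that
\[
A_{R,K} = \bigl\{ \, |\{ f \in F(T) : 1 \leq |g(f)| \leq R \}| \geq K \, \bigr\}
\]
has probability at least $p/2$ in the limit law. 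After a harmless open enlargement in $R,$ the event $A_{R,K}$ is a continuity set for the topology on $\mathcal{E},$ so weak convergence $E_n \Rightarrow E$ along the chosen subsequence gives $\P(E_n \in A_{R,K}) \geq p/3$ for all sufficiently large $n.$

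Next I will exploit unbiasedness. By the determination hypothesis, $T_n$ determines $g_n$ up to similarity, so re-rooting at another face $o' \in F(T_n)$ simply replaces $g_n$ by $g_n^{o'}(\cdot) = (g_n(\cdot) - g_n(o'))/s_{o'}$ with $s_{o'} = \inf_{f \neq o'} |g_n(f) - g_n(o')|.$ Since conditional on $T_n$ the root is uniform on $F(T_n),$ the lower bound on $\P(E_n \in A_{R,K})$ translates into the statement that, on a set of triangulations of positive probability, a positive fraction of the faces of $T_n$ are \emph{crowded}: at least $K$ other faces $f$ satisfy $|g_n(f) - g_n(o)| \leq R\, s_{o}.$

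The final step is the mass-transport argument of \cite{BS}. I transport a unit of mass from each crowded $o$ spread uniformly over its $\geq K$ crowding witnesses, and invoke the tautological mass-transport identity for finite unbiased structures (sum of mass sent equals sum of mass received after relabeling the root). The contradiction arises provided one can show that a single face can serve as a crowding witness for only boundedly many centers, say at most $C(R),$ because then the mass received by a typical face is $\leq C(R)/K,$ while the mass sent by a typical face is bounded below by (positive density of crowded faces) $\cdot 1.$ Sending $K \to \infty$ with $R$ fixed then forces the density of crowded faces to zero, contradicting the positive lower bound obtained above.

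The main obstacle is exactly this ``bounded witness count.'' In the circle-packing setting of \cite{BS} it comes from disjointness of the packed discs combined with the Ring Lemma, which in turn requires bounded degree. For the general center embeddings of the proposition I expect the bound to be extracted from the normalization together with the injectivity of $g,$ noting that the $g$-images of witnesses for $o$ lie in the annulus $s_o \leq |z - g_n(o)| \leq R\, s_o$ and may be controlled by a packing-style area argument. In the concrete applications (Theorem~\ref{main} and its corollaries) where $g = \phi \circ c$ arises from the uniformizing conformal map, this bound is supplied cleanly by Lemma~\ref{bddgeometry}, and that is where conformality actually enters the proof.
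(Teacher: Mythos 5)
There is a genuine gap, and it is twofold. First, your contradiction event is the wrong one. The event $A_{R,K}$ (at least $K$ points in a fixed annulus around the root) records only crowding near the root, and it is triggered by a \emph{single} finite accumulation point --- which the proposition permits. A concrete counterexample: take the deterministic sets $V_n=\{2^k:0\le k\le n\}$ with uniform root; re-rooting at $2^m$ and normalizing gives the configuration $\{2(2^{j}-1)\}$, whose distributional limit accumulates at $-2$ and nowhere else in $\C$. For this limit, $A_{R,K}$ holds almost surely for every $K$ (with, say, $R=4$), while the conclusion of the proposition is satisfied; so no contradiction can legitimately be extracted from $\P(A_{R,K})\ge p/3$ for all $K$, and an argument that did so would prove a false statement. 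What the two-accumulation-point hypothesis actually buys --- and what your event throws away --- is robustness under deleting one disk. The paper fixes $p_1,p_2\in D(0,1/\delta)$ with $|p_1-p_2|\ge 3\delta$ such that both $D(p_1,\delta)$ and $D(p_2,\delta)$ contain at least $s$ image points coming from faces in a finite ball $B_{k_s}$; since any disk $D(p,\delta)$ can meet at most one of the two clusters, the root $0$ (whose isolation radius is $1$ by the normalization \eqref{cenorm}) is $(\delta,s)$-supported in the sense of Lemma A, with its crucial $\inf_p$. The truncation to $B_{k_s}$ is also what makes the event open for $d_{\mathcal{E}}$ so that the portmanteau liminf applies; your ``harmless open enlargement'' of $A_{R,K}$ does not address this, since membership in $A_{R,K}$ depends on infinitely many coordinates.

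Second, the mass-transport step fails exactly where you flag it, and the proposed repair misreads where conformality enters. The ``bounded witness count'' is simply false for general point sets: place roots in pairs at geometrically shrinking scales $2^{-k}$ around a single point $w$; then $w$ lies in the annulus $s_o\le |z-g_n(o)|\le R\,s_o$ for unboundedly many roots $o$, so no constant $C(R)$ exists. Nor can Lemma \ref{bddgeometry} rescue the proposition as stated: the paper emphasizes that Proposition \ref{parabolic-limit} assumes \emph{nothing} geometric about the embeddings --- not bounded geometry, not even planarity of the graph --- and its proof uses none of that; conformality and Lemma \ref{bddgeometry} enter elsewhere (in Proposition \ref{parabolic} and in the tightness/normality argument in the proof of Theorem \ref{main}). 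The entire quantitative content of this proposition is Lemma 2.3 of \cite{BS} (Lemma A): for every $\delta\in(0,1)$ and $s\ge 2$, the proportion of $(\delta,s)$-supported points in an arbitrary finite $V\subset\C$ is less than $c(\delta)/s$ --- a deterministic multi-scale counting statement requiring no witness-count bound. Your proposal in effect attempts to re-derive a weak version of this lemma by mass transport, and it is precisely that re-derivation which breaks. With Lemma A quoted (and the two-cluster, disk-deletion-robust event in place of $A_{R,K}$), the remaining soft framework of your argument --- contradiction setup, unbiasedness as uniform re-rooting, weak convergence --- does match the paper's proof.
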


By a random center embedding we mean a probability measure $\P$ on $\mathcal{E}$, 
and unbiasedness refers to the induced measure on rooted graphs (given a finite graph, each face
has the same probability of being the root). 
Notice that the existence of a limit is an assumption, not a conclusion.

\bigskip
\noindent
\begin{example}\label{expl}
{\rm
Instructive examples are the 6-regular triangular lattice and the 7-regular lattice
and the corresponding centers of Riemann surface interstices $E_{\infty}$ (or circle packings ${\bf P}_{\infty}$). 
If $E_n$ is obtained from the combinatorial ball $B_n$ 
by uniformly choosing a root face, then the proposition applies in both the 6- and 7- regular cases, 
and the limit $E$ (which is easily seen to exist in both cases) has no finite limit point.  In Figure \ref{fig:lattice} below, we show two versions of circle packings of the 7-regular lattice.  The first is an approximation to $E_\infty$. The 
set $E_{\infty}$ (resp. ${\bf P}_{\infty}$) clearly has infinitely many limit points in the 7-regular case.  This appears to be a contradition to our proposition.  It is not.
The explanation is that $E_{\infty}\neq E$ in the 7-regular case.  The drawing on the right of Figure \ref{fig:lattice} is a better picture for the limit embedding $E$.   It is simply what the  figure on the left becomes when one normalizes with respect to a boundary circle. Moreover, in this example, Theorem \ref{main} does not apply: Because of the linear isoperimetric inequality, the distance of the root to the boundary does not tend to infinity a.s.
}
\end{example}

\begin{figure}[htp]
\centering
\includegraphics[scale=0.30]{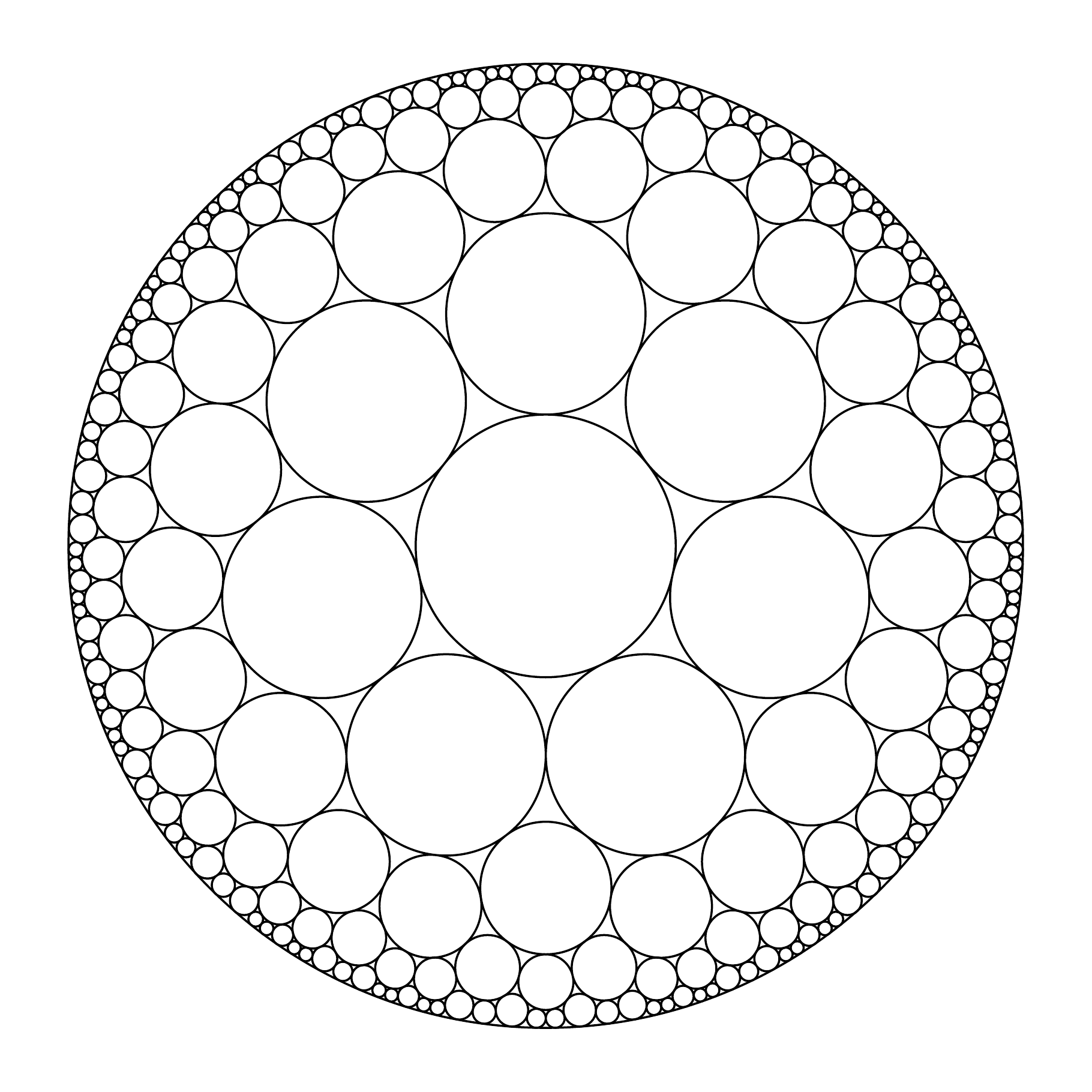} \hspace{1cm} \includegraphics[scale=0.30]{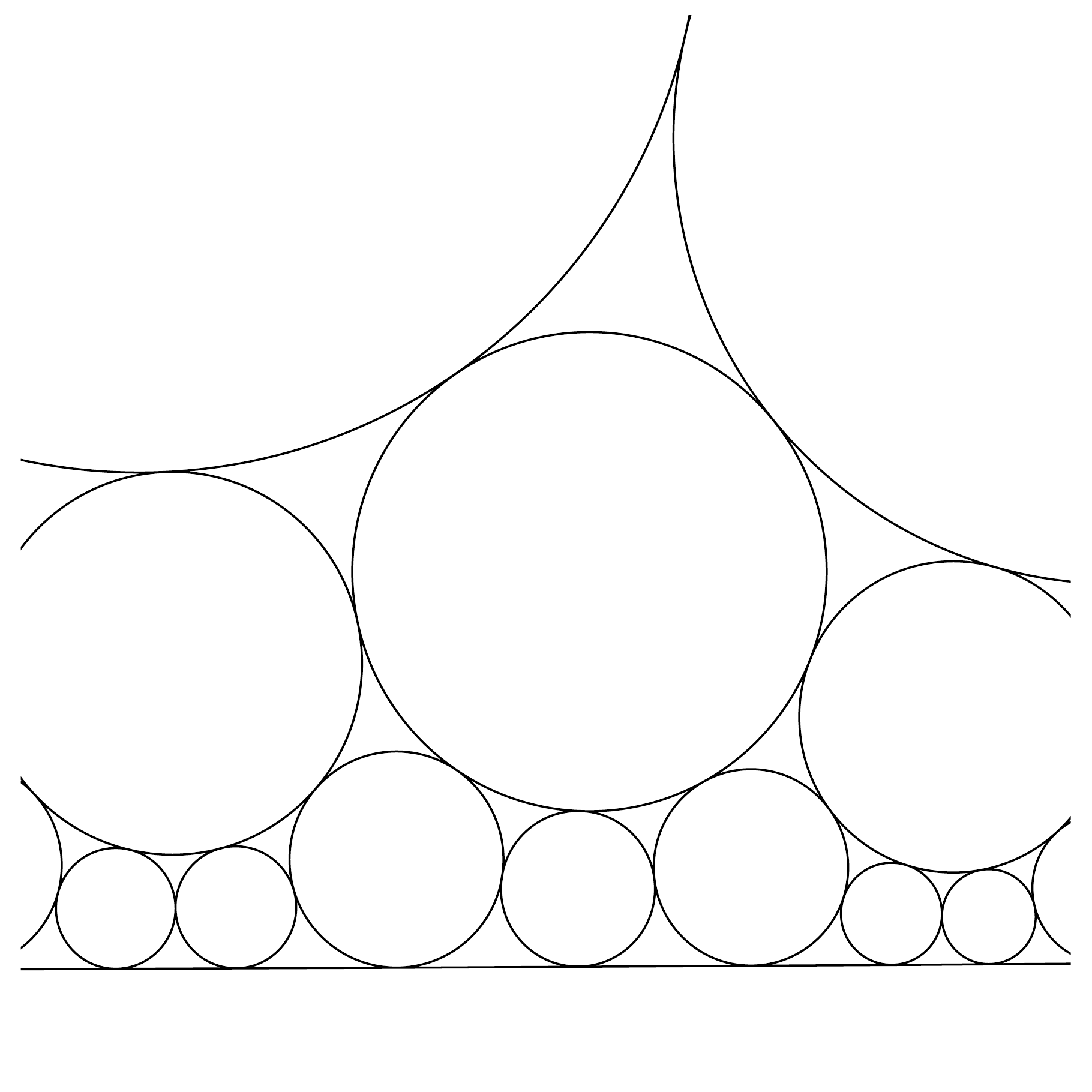}
\caption{Approximations of 7-regular lattice circle packing, two points of view}\label{fig:lattice}
\end{figure}

As mentioned in the introduction,
the proof of Proposition \ref{parabolic-limit} is essentially the same as the proof of Proposition 2.2
in \cite{BS}. Since our setup is different, we go through the details for the convenience of the reader. 
The key technical result is the following magical Lemma 2.3 from \cite{BS} (also see \cite{BC} for a nice exposition and generalizations to higher dimensions). It can be viewed as a quantitative statement to the 
effect that every finite planar set ``looks like" it has at most one accumulation point, when viewed from ``most" of the points. Here is the precise statement. Let $V \subset \C$ be a finite set of points (in our application, $V$ will consist of the marked points in the packings).  For $v \in V$, the isolation radius is defined as
\[ \rho_v := \inf\{ |v-w|: w \in V \}. \]
Given  $\delta > 0$, $s>0$, and  $v \in V$, say that $v$ is $(\delta, s)$-{\em supported} if
\[ \inf_{p \in \mathbb{C}} \left| V \cap \left(D(v,\rho_v / \delta) \setminus D(p, \rho_v \delta) \right) \right| \geq s.\]

\vspace{2mm}\noindent{\bf Lemma A} (Lemma 2.3 in \cite{BS}) {\em
For every $\delta \in (0,1)$ there is a constant $c = c(\delta)$ such that for every finite set $V \subset \mathbb{C}$ and every $s \geq 2$ the proportion of $(\delta, s)$-supported points in $V$ is less than $c/s$.}
\vspace{2mm}

\begin{proof}[Proof of Proposition \ref{parabolic-limit}]
The proof is by contradiction.  Suppose that a distributional limit $E =(T,o,g)$ has two or more accumulation points with positive probability.  
Then there is $\delta>0$ and two points  $p_1,p_2 \in D(0,1/\delta)$,  $|p_1-p_2| \geq 3\delta$,
such that the set of embeddings with accumulation points in $D(p_1,\delta)$ and in $D(p_2,\delta)$  has positive probability $\alpha>0$.  
For $k>0,$ denote $\mathcal{O}_k$  the set of all embeddings such that 
 $B(p_1,\delta) $ and $B(p_2,\delta)$  contain at least  $s$ points from the embedding  corresponding to faces in  $B_{k}.$
Then there is $k=k_s$ such that the event $T\in \mathcal{O}_{k_s}$ has  probability at least $\alpha/2.$
Since $\mathcal{O}_{k_s}$ is open with respect to the metric $d_\mathcal{E}$, the
distributional convergence implies that 
the liminf of the probability of $T_n\in \mathcal{O}_{k_s}$  is greater than or equal to  $\alpha/2.$
But  the magical Lemma A, with $\delta$ fixed,  implies that $T_n\in \mathcal{O}_{k_s}$
has probability at most $c(\delta)/s$, a contradiction when $s>\alpha/(2c(\delta)).$

\end{proof}

\bigskip
\noindent
{\bf Remark:} For the purpose of proving parabolicity of the UIPT and the necklace, we could work 
in the perhaps more intuitive setting of packings of topological discs, instead of center packings.
That is, we could consider finite or infinite
collections ${\bf P} =\{P_v : v \in V\}$ of closed topological discs $P_v\subset\C$ with pairwise disjoint
interiors, together with a collection of interior points $x_v \in P_v$. 

Then the appropriate topology is induced by the metric
\[
d_\mathcal{P}({\bf P},{\bf P}') = d_c(P,P') +
\sum_{n = 0}^{d_c(P,P')^{-1}} 
\frac1{2^{n+1}|B_n|}
\sum_{v \in B_n} \Bigl( \frac{ |x_{v}-x_{v'}| }{1+|x_{v}-x_{v'}|}  +  
                  \frac{ d_H(P_{v},P_{v'}) }{1+d_H(P_{v},P_{v'})}\Bigr), 
\]
where $d_H$ is Hausdorff distance. The laws of the interstice-packings converge even in this finer topology. But Proposition \ref{parabolic-limit} is more general and the assumptions are easier to verify.

\subsection{Proof of Theorem \ref{main}}

\begin{proof}[Proof of Theorem \ref{main}]

With an eye toward Proposition  \ref{parabolic-limit}, we first associate a
center embedding with a rooted triangulation.
For each finite (unrooted) disc triangulation $T$, fix a conformal map 
$$\phi_T: R(T)\to \C.$$ Given a rooted finite disc triangulation $(T_n,o_n)$,
define a normalized conformal map by
\begin{equation}\label{phiT}
\phi_n = a \phi_{T_n} + b
\end{equation}
where $a>0$ and $b\in\C$ are chosen so that 
$$\phi_n(c_{o_n}) = 0$$
and 
\begin{equation}\label{norm}
\inf_f \dist(0,\phi_n(I_f))=1,
\end{equation}
where the infimum is over all faces $f\in F(T)\setminus\{o\}.$
Notice that
\begin{equation}\label{norm2}
\inf_f \dist(0,\phi_n(I_f))=\min_{f\in B_2} \dist(0,\phi_n(I_f))
\end{equation}
if $d_{gr}(o,\partial T)\geq2.$
Now assume that the 
unbiased disc triangulations $(T_n,o_n)$
converge in distribution to $(T,o)$.
We claim that the sequence $(T_n,o_n,\phi_n)$  has a subsequential
distributional limit $(T,o,\phi)$ with respect to compact convergence, and that $\phi$
is a normalized conformal map of $(T,o).$
To see this, we  first prove tightness of the law $\P_n$ of $(T_n,o_n,\phi_n)$.
As $(T_n,o_n) \to (T,o)$ in distribution, if $\epsilon >0$ is given, we can choose $L_r$ 
such that the set $A$ of all triples  $(T,o,\phi)$, with 
$$\max_{v\in B_r(o)} \deg{v} < L_r$$ 
for all $r$ 
and $\phi$ a normalized map of $(T,o),$
has
\begin{equation}\label{tight}
\P_n[A]\geq 1-\eps
\end{equation}
for each $n.$ To see that $A$ is compact, fix a (deterministic) sequence $(T_n,o_n,\phi_n)\in A$
and assume without loss of generality that $(T_n, o_n)$ is already converging 
(as there are a bounded number of possible $r$-neighborhoods of the root for each $r\geq1$,
every sequence of triangulations in $A$ has a  subsequence that converges with respect to $d_{gr}$).
Then fix $r$ and consider the sequence $\phi_n$ restricted to $B_r$. For $n$ large enough
$B_r(T_n)$ is isomorphic to $B_r(T)$, and we can assume that all $\phi_n$ are defined on the same 
subset $B_r$ of $T.$ Denote $c=c_{o}$ the center of the root triangle, and $c'$ the center of a fixed neighboring triangle of the root. Since 
the interstices have disjoint interiors,  it follows from \eqref{geometry} and \eqref{norm} that
$$\diam \phi_n(I_o) \leq C \inf_{f\neq o}  \dist(0,\phi_n(I_f)) =C.$$
On the other hand, \eqref{centergeom} implies
$$1\leq |\phi_n(c)-\phi_n(c')|\leq C\diam \phi_n(I_o)$$
and we obtain 
\begin{equation}\label{eq8}
1\leq |\phi_n(c')|\leq C^2.
\end{equation}
Since the family $\phi_n$ omits the values $0, \phi_n(c')$ and $\infty$ on $B_r\setminus\{o,o'\}$, 
Montel's theorem implies that it is a normal family. 
By a diagonal process we can extract a subsequence that converges compactly on every $B_r,$
thus on $R(T).$ It follows from \eqref{eq8} that the limit is non-constant, and by \eqref{norm2} it is a normalized
conformal map of $R(T)$. Thus $A$ is compact, and $\P_n$ is tight.
Now Prokhorov's theorem shows existence of subsequential distributional limits, where $\phi$ is a normalized conformal map of $(T,o).$ 
Along this subsequence,  $E_n = (T_n, o_n,g_n)$ satisfies the assumptions of 
Proposition \ref{parabolic-limit}, where $g_n$ denotes the restriction of $\phi_n$
to the centers $c_f$ of the faces $f$ of $T_n,$ renormalized to satisfy \eqref{cenorm}.
Indeed, if $T_n$ and $T'_n$ are equivalent as unrooted
triangulations, then $\phi_n=a \phi + b$ and $\phi'_n=a' \phi + b'$ by \eqref{phiT}
so that $g_n$ and $g'_n$ are similar.
It follows from Proposition \ref{parabolic-limit}, that the sequence of centers has only one 
accumulation point in $\C$. Hence it has at most two accumulation points in $\overline\C$. Since $d_{gr}(o,\partial T) = \infty$, 
Proposition \ref{parabolic} (more precisely Remark \ref{rmk}) implies that $R(T)$ is parabolic a.s.
\end{proof}

\section{Applications}

\subsection{Parabolicity of the UIPT}

Following Angel and Schramm, denote
$\mathcal{T}_n \subset \mathcal{T}$ the set of all rooted triangulations of $S^2$ with $n$ vertices,
 and  denote $\tau_n$ the uniform measure on $\mathcal{T}_n$. More precisely,  for $j=2$ and $j=3$ denote $\tau_n^{j}$ the uniform
measure on the the set of triangulations of type II  (no loops, but double edges allowed) and of type III (neither loops nor double edges), and
let  $\tau_n$ denote either of the two measures.  The following statements are proved in \cite{AS}:

\vspace{2mm}\noindent{\bf Theorem B.} (Theorem 1.8, Theorem 1.10, and Corollary 4.5 in \cite{AS}) {\em
The measures $\tau_n$ converge in distribution %as $n \to \infty$ 
to a measure $\tau$ supported on infinite triangulations in $\mathcal{T}$,
called the uniform infinite planar triangulation (UIPT).  
Samples from $\tau$ have almost surely one topological end.  Moreover, for each fixed $r$,
the law of the maximal vertex degree  in the ball of radius $r$ (centered at the root)
is tight.}
\vspace{2mm}

\begin{proof}[Proof of Corollary \ref{coruipt}]
By the Euler formula, a triangulation of the sphere with $n$ vertices has
$2n-4$ faces and $3n-6$ edges.
In order to apply Theorem \ref{main}, we need to approximate the UIPT by finite unbiased disc triangulations with $d(o_n,\partial T_n) \to \infty$.  To this end, let $S_n$ be an unrooted triangulation 
chosen according to the distribution induced by $\tau_n$ (which is not quite uniform due to triangulations with non-trivial symmetries).  
Choose two of the $2n-4$ faces $\Delta$ and $o$ uniformly at random. Then $(S_n, o)$ is a uniform rooted triangulation, and
$o$ is uniformly distributed over the triangles of $S_n\setminus\Delta.$
Now let $(T_n, o_n)$ be the random rooted disc triangulation obtained by removing $\Delta$ from $S_n$ and by setting $o_n=o.$ 
By tightness of the number of vertices in the ball $B_r$, one finds that
\[ d_{gr}(o_n,\partial T_n) = d_{gr}(o, \partial (S_n\setminus \Delta)) \to \infty \]
in distribution.
It follows that $(T_n, o_n)$ and $(S_n, o_n)$ have the same distributional limit, namely the UIPT. 
By Theorem B, the limit is one-ended almost surely, so that all assumptions of Theorem \ref{main} are satisfied and the UIPT is parabolic.

\end{proof}

\subsection{The infinite necklace}\label{intronecklace}

\subsubsection{Setup}

We begin with a description of the ``necklace''  construction of Random maps due to Scott Sheffield, 
and then show that the assumptions of Theorem \ref{main} are satisfied.

At eachtime $j$ of the inductive construction, we will have a triangulation of a disc $D_j$ in the closed upper half plane, consisting of $j$ triangles. Each vertex has one of two colors, say blue or red. The boundary of $D_j$ has one marked ``active'' edge with one blue endpoint $b_j$ and one red endpoint $r_j$, along which the growth of $D_j$ takes place. Initially ($j=0$) there are no triangles, we color the non-positive integers blue, the positive integers red, and set the active edge as the line segment $[0,1].$ To pass from $D_j$ to $D_{j+1}$, we glue a triangle to $D_j$ along the marked edge. There are four possible ways of glueing the new triangle, depicted by an explicit example in Figure \ref{fig:rbtriangle}.
We label these four choices as $B,b,R$ and $r$.

\begin{figure}[htp]
\centering
\includegraphics[scale=0.40]{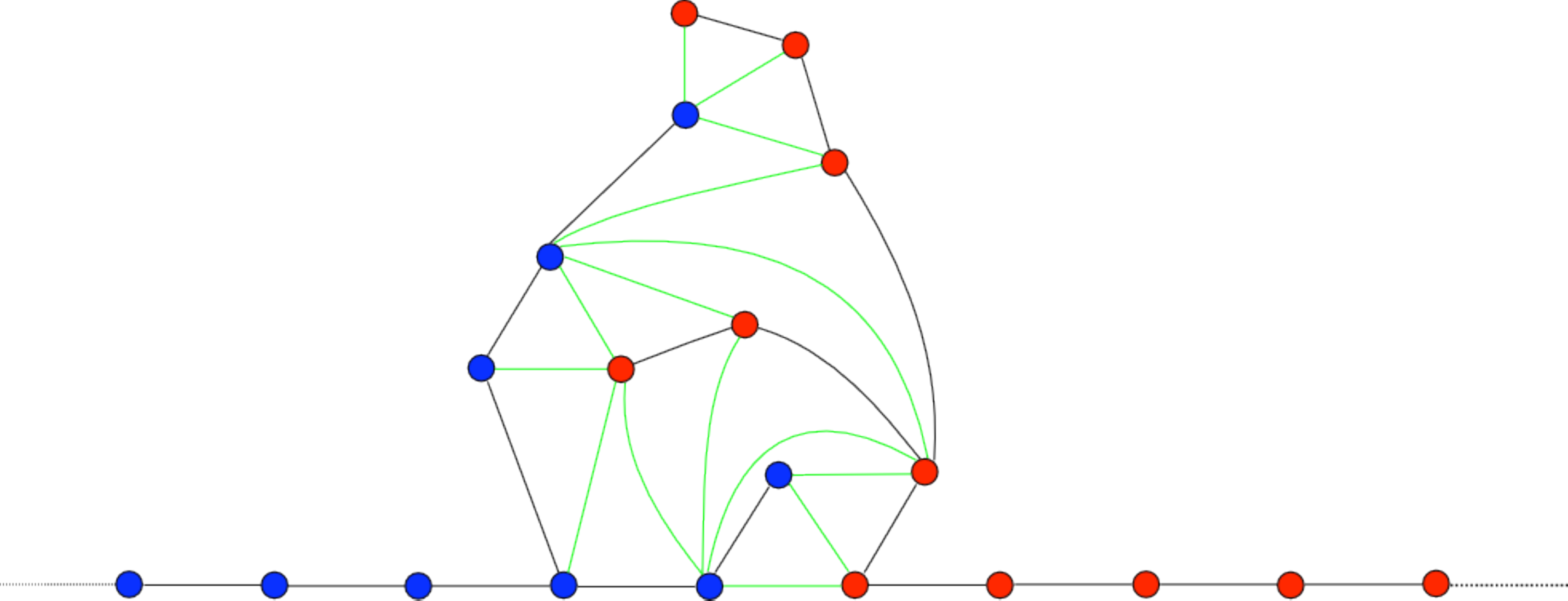}
\caption{The upper half plane necklace for the sequence $BRbRRbBBrrRBRR$}\label{fig:rbtriangle}
\end{figure}

%Adding a blue triangle 
Choice B consists of adding a new blue vertex $b_{j+1}\in\H\setminus D_j$,  adding two edges $(b_{j+1},b_j)$ and $(b_{j+1},r_j)$, and setting $r_{j+1}=r_j.$ Thus $(b_{j+1},r_j)$ is the new active edge, and $D_{j+1}$ is the union of $D_j$ with the triangle $t_j$ with vertices $b_j, r_j, b_{j+1}.$ Similarly, 
%adding a red triangle 
choice R consists of adding a red vertex $r_{j+1}$,  adding the edges $(r_{j+1},b_j)$ and $(r_{j+1},r_j)$, and setting $b_{j+1}=b_j.$ 
%Removing a blue triangle 
Choice b involves two slightly different cases: in the case that $b_j\in\H$ consists of letting $b_{j+1}$ be the counterclockwise neighbor of  $b_j$ in the boundary  of $D_j$, 
setting $r_{j+1}=r_j$, and adding the new active edge $b_{j+1},r_{j+1}$.
%Removing a blue triangle 
Choice b in the other case that  $b_j$ is an integer $m$,  we set $b_{j+1}=m-1$, add the edge $[m,m-1]$, again set $r_{j+1}=r_j$, and again add the new active edge $b_{j+1},r_{j+1}.$ In either case, a triangle with vertices $(b_{j+1}, b_j,r_{j+1})$ is added.
%Similarly, removing a red triangle means to 
Similarly, choice r sets $r_{j+1}$ as the clockwise neighbor of $r_j$ (and to set $r_{j+1}=m+1$ and  adds the edge $[m,m+1]$ if $r_j=m$ is a positive integer),  sets $b_{j+1} = b_j$, and adds the edge $(b_j, r_{j+1}).$

\begin{defn}

Given a finite word $X=x_1 x_2...x_n$ in the letters $x_i\in\{B,b,R,r\},$ we denote $T_{+}(X)$ the disc triangulation $D_n$ obtained by succesively applying the choices $x_i$. Similarly, we denote $T_{-}(X)$ the triangulation we obtain if we add triangles to the lower half plane instead of the upper half plane. Both $T_+$ and $T_-$ are rooted triangulations, whose root is the first triangle constructed. We denote $T(X,k)$ the rooted triangulation obtained from $T_+(X)$ by choosing the triangle created at $x_k$ as the root.

\end{defn}

%Adding a triangle to the upper half plane results in the same triangulation as removing a triangle (of the same color) from the lower half plane, and vice versa. 
Note the following simple fact, which is key to our approach: 
Write $B'=b, R'=r, b'=B, r'=R.$ If $X=x_1 x_2 ... x_n$ and $Y=y_1 y_2 ... y_m$, and if $Z=Z(X,Y)=y_m' y_{m-1}' ... y_1' x_1 x_2 ... x_n,$ then
\begin{equation}\label{fund}
T(Z, m+1) = T_+(X) \cup T_-(Y).
\end{equation}

Here we interpret the right hand side as a triangulation of the set $D_n(X)\cup D_m(Y)$ with root triangle the root of $D_n(X).$ We would like to extend the definition to infinite sequences $X=x_1 x_2...$  in a natural way. Notice however that the corresponding graph is not neccessarily even locally finite. We will show below that in the probabilistic setting, almost surely such pathologies do not occur.
If $X$ and $Y$ are two random infinite sequences with respect to the uniform measure (that is, the $x_i$ are i.i.d. with $P(x_i=x)=1/4$ for all $x\in\{B,b,R,r\}$), then we can think of $T_+(X)$ and $T_-(Y)$ as random triangulated halfplanes (``Quantum disks''), and interpret $T_+(X) \cup T_-(Y)$ as a glueing of two independent quantum disks. In the proof of Corollary \ref{corneck} we will show that 
$T_+(X) \cup T_-(Y)$ is indeed a disc triangulation. Provide $T_+(X) \cup T_-(Y)$ with a Riemann surface structure  by glueing equilateral triangles as in Section \ref{S:2.2}, and denote the resulting surface $R(X,Y)$. Our main result of this section is a proof of a conjecture of Scott Sheffield:

\begin{thm} The random Riemann surface $R(X,Y)$ is almost surely parabolic.
\end{thm}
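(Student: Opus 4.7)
The plan is to exhibit a sequence of unbiased random finite disc triangulations whose distributional limit is $R(X,Y)$ and then apply Theorem \ref{main}. The key ingredient is the identity \eqref{fund}, which identifies a re-rooted one-sided necklace with a two-sided necklace built from two independent random half-planes.

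For each $N\geq 1$, let $Z_N$ be uniform in $\{B,b,R,r\}^N$ and $k_N$ an independent uniform index in $\{1,\ldots,N\}$, and set $(T_N,o_N):=T(Z_N,k_N)$. The map $Z\mapsto T_+(Z)$ is a bijection from length-$N$ words onto a specific class of rooted disc triangulations with $N$ triangles, so taking $Z_N$ uniform and then rerooting uniformly at one of the $N$ faces produces an unbiased random rooted disc triangulation (at worst in the weak sense of the remark following Theorem \ref{main}). By \eqref{fund}, conditionally on $k_N=m+1$ we have $T_N = T_+(X^{(n)})\cup T_-(Y^{(m)})$ where $X^{(n)},Y^{(m)}$ are independent uniform words of lengths $n=N-m$ and $m$. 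Since $k_N/N$ is approximately uniform on $[0,1]$, both $n$ and $m$ tend to infinity in probability, so for any fixed $r$ the combinatorial $r$-ball of $(T_N,o_N)$ eventually coincides with the $r$-ball of $T_+(X)\cup T_-(Y)$ for $X,Y$ infinite i.i.d.\ uniform sequences. Simultaneously this proves that $T_+(X)\cup T_-(Y)$ is almost surely a locally finite disc triangulation and that $(T_N,o_N)\to R(X,Y)$ in the $d_c$-topology.

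It remains to verify the two hypotheses of Theorem \ref{main}. \emph{One-endedness}: $R(X,Y)$ is an increasing union of the topological discs $T_+(X^{(n)})\cup T_-(Y^{(m)})$, so the complement of any compact subset has a unique unbounded component almost surely. \emph{Growth of the graph distance to the boundary}: I would show that the boundary of the finite necklace disc $T_+(X^{(n)})$ has size of order $\sqrt n$ with high probability, as asserted by the upcoming Lemma 3.5, with the analogous statement for $T_-(Y^{(m)})$. Combined with uniform rerooting among the $N$ faces, a first-moment argument gives, for each fixed $r$,
\[
\P\bigl(d_{gr}(o_N,\partial T_N)\leq r\bigr) \leq C_r\,N^{-1/2} \longrightarrow 0,
\]
so $d_{gr}(o_N,\partial T_N)\to\infty$ in law. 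Applying Theorem \ref{main} to the sequence $(T_N,o_N)$ concludes that $R(X,Y)$ is parabolic almost surely.

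The principal obstacle is the $\sqrt n$ boundary-size estimate. Under the i.i.d.\ letters, the length of the \emph{active} edge-path performs a simple $\pm 1$ random walk (letters $B,R$ lengthen it; $b,r$ shorten it), but the full outer boundary of the built disc is not this walk directly; rather, it is the residual structure left behind once previously active edges are absorbed by $b,r$ moves and glued to newly created ones. Controlling this renewal/excursion structure and extracting a uniform-in-$n$ bound of order $\sqrt n$ for the boundary length (and, for the first-moment argument above, a bound on how many triangles lie within graph distance $r$ of the boundary) is the combinatorial heart of the matter. Once these quantitative estimates are in hand, the rest of the argument—unbiasedness, local convergence, one-endedness, and the final application of Theorem \ref{main}—is essentially a soft assembly.
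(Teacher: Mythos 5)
Your architecture is exactly the paper's: the same approximating sequence $(T_N,o_N)=T(Z_N,k_N)$ with uniform word and uniform re-rooting, the same use of \eqref{fund} to identify the conditional law given $k_N$ as a gluing of two independent half-necklaces, the same one-endedness argument (increasing union of discs whose complements are connected through the boundary), and the same final appeal to Theorem \ref{main}. But, as you concede yourself, what you have written is a reduction rather than a proof: the estimates you defer are precisely the content of the paper's Lemmas \ref{degree}, \ref{boundarysize} and \ref{boundarydistance}, and the argument does not close without them. Moreover, your claimed bound $\P(d_{gr}(o_N,\partial T_N)\le r)\le C_r N^{-1/2}$ is stronger than what your sketched first-moment argument can deliver: since vertex degrees are unbounded, the number of faces within graph distance $r$ of the boundary cannot be bounded by a constant times $|\partial T_N|$. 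This is why the paper first proves a geometric tail for degrees --- the degree of the vertex created at time $k$ is at most twice the number of visits of the relevant coordinate walk to its creation level while staying above it, cf.\ \eqref{degree0}, giving $\P[\deg(v)\ge m]\le 2(3/4)^{m/4}$ (Lemma \ref{degree}) --- then deduces $M_n\le C\log n$ with probability at least $1-1/n$ by a union bound, bounds the near-boundary count by $|\partial T_n|(C\log n)^k$, and obtains $\P[d(o_n,\partial T_n)\le k]\le 2/n + (C\log n)^{k+1}/\sqrt n$ (Lemma \ref{boundarydistance}): polylogarithmically weaker than your claim, but still sufficient.

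For the boundary estimate itself, the paper's route is considerably simpler than the renewal/excursion analysis you anticipate, and your heuristic (tracking the perimeter as a single $\pm1$ walk) is not quite the right encoding. The paper associates with the word the planar simple random walk $S_k=(X_k,Y_k)$, where $B,b$ move the first coordinate by $\pm1$ and $R,r$ the second. A blue vertex created at time $k$ remains on the outer boundary of $T_+(x_1\cdots x_n)$ iff $X_j\ge X_k$ for all later $j$ (cf.\ \eqref{outerboundary}), whence $|\partial T_+(x_1\cdots x_n)|\le a+b+|X_n+a|+|Y_n+b|$ with $a=|\min_k X_k|$ and $b=|\min_k Y_k|$, and Azuma's inequality gives $\P[|\partial T_n|\ge t\sqrt n]\le 2\exp(-t^2/32)$ (Lemma \ref{boundarysize}); no excursion decomposition is needed. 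Two smaller points. First, your appeal to a bijection between words and rooted triangulations is both false (e.g.\ the words $B$ and $R$ produce equivalent rooted triangulations) and unnecessary: conditionally on the word $Z_N$, the root is exactly uniform over the $N$ faces, since each face is created at a unique step, so the law of $(T_N,o_N)$ is a mixture of the measures $\mu_H$ and is unbiased in the paper's literal sense --- no weakened form is required. Second, your assertion that $r$-balls eventually stabilize (needed both for local finiteness of $T_+(X)\cup T_-(Y)$ and for $d_c$-convergence, cf.\ \eqref{eqbr}) does require the observation that each coordinate walk almost surely dips below the creation level of any given vertex, so that every vertex becomes interior at a finite time; this is soft, but it is a step, not a byproduct of $n,m\to\infty$.
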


In order to apply Theorem \ref{main}, we would like to construct a sequence of random finite disc triangulations converging to $T_+(X) \cup T_-(Y)$ in distribution. This will be achieved by $$T_n=T(X,k)$$ where $X=x_1...x_n$ is uniform and $k$ is uniformly chosen from $\{1,2,...,n\}$.

\begin{lemma} The law of $T_n$ converges in distribution to the law  of $T_+(X) \cup T_-(Y)$ with respect to $d_c$.
\label{rblimit}
\end{lemma}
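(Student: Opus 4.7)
The plan is to use the key identity \eqref{fund} to exhibit $T_n$ as an initial segment of an infinite version of the necklace construction, and then establish a locality property for the combinatorial ball around the root.

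First, writing $T_n = T(W,k)$ for a uniform word $W = w_1\ldots w_n$ and uniform root position $k\in\{1,\ldots,n\}$, the identity \eqref{fund} immediately gives
$$
T_n \;=\; T_+(X_n)\cup T_-(Y_n),
$$
where $X_n = w_k\,w_{k+1}\ldots w_n$ has length $n-k+1$ and $Y_n = w_{k-1}'\,w_{k-2}'\ldots w_1'$ has length $k-1$. Since $x\mapsto x'$ is a bijection on $\{B,b,R,r\}$, each of $X_n,Y_n$ is uniform conditional on its length and the two are independent. With $k$ uniform in $\{1,\ldots,n\}$ and $n\to\infty$, both $|X_n|$ and $|Y_n|$ tend to infinity in probability; hence $(X_n,Y_n)$ can be coupled with a pair $(X,Y)$ of independent infinite uniform words so that their first $L$ letters agree with probability $1-o(1)$, for any fixed $L$.

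Next I would establish the following locality claim: for every $r\ge1$ there is an almost-surely finite random time $M=M(r)$ such that the combinatorial ball $B_r$ around the root triangle $t_1$ of $T_+(X)\cup T_-(Y)$ is determined by the first $M$ letters of each of $X$ and $Y$. The mechanism is that any triangle $t_{j+1}$ added at step $j+1$ of the $T_+$ construction shares an edge with the triangle $s_j$ incident to the current active edge $(b_j,r_j)$, and in the cases $b,r$ also (when the active vertex lies in $\H$) with a neighboring boundary triangle $s_j'$ at $b_j$ or $r_j$. Consequently, once $s_j$ and $s_j'$ both lie at combinatorial distance $>r-1$ from $t_1$ for every $j\ge M$, no subsequently added triangle can enter $B_r(t_1)$; the analogous statement applies on the $T_-$ side.

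The main obstacle is proving $M(r)<\infty$ almost surely, i.e.\ that the triangles incident to the active edge escape every fixed dual-graph neighborhood of $t_1$ eventually. Under the uniform law on letters, the perimeter of $D_j$ grows unboundedly in probability (by a random-walk argument akin to the peeling estimates of \cite{AS}), while the dual-graph $r$-neighborhood of $t_1$ contains only a bounded number of boundary edges at any time; a Borel--Cantelli argument then shows that the active edge visits any fixed neighborhood only finitely often a.s. With locality in hand, the lemma follows by a standard approximation: for any $r$ and $\epsilon>0$, pick $M_0$ with $\P(M(r)>M_0)<\epsilon$; for $n$ large enough that $\P(|X_n|\wedge|Y_n|\ge M_0)>1-\epsilon$, the coupling identifies $B_r(T_n)$ with $B_r(T_+(X)\cup T_-(Y))$ on an event of probability $\ge1-2\epsilon$, giving the required convergence in distribution with respect to $d_c$.
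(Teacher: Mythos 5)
Your overall architecture coincides with the paper's: use \eqref{fund} to write $T_n=T_+(x_k\cdots x_n)\cup T_-(x_{k-1}'\cdots x_1')$ as a union of two independent uniform words whose lengths tend to infinity (the paper runs the same coupling via $\P[m<k<n-m]=1-2m/n$), establish that the combinatorial ball $B_r$ around the root is a.s.\ determined by finitely many letters (the paper's \eqref{eqbr}), and conclude by the standard approximation. The gap is in your justification of the locality claim, which is the only substantive step. First, your escape criterion is phrased in dual-graph distance, but $d_c$ is built from the vertex-incidence balls of \cite{AS}: $B_{r+1}$ consists of all triangles incident to a \emph{vertex} of $B_r$. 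Since vertex degrees are unbounded, a triangle glued along the active edge can be at large dual distance from $t_1$ and still be incident to a vertex of $B_{r-1}$ --- for instance to the root vertex, whose fan of incident triangles can reach dual distance of order $\deg(v)$ from $t_1$ while that vertex is still on the active boundary --- so ``$s_j,s_j'$ at dual distance $>r-1$'' does not prevent later triangles from entering $B_r$. Moreover the moves $b,r$ add an edge between two \emph{existing} boundary vertices, so graph distances to the root can shrink after time $j$; a distance condition checked ``at time $j$'' does not survive to the final triangulation.

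Second, the probabilistic input is insufficient: growth of the perimeter \emph{in probability} yields no summable bounds, so no Borel--Cantelli argument can be run from it, and in any case a large perimeter does not keep the active edge away from the root --- the coordinate walk $X_j$ is recurrent, so the active blue vertex returns to level $0$ infinitely often. What is true, and what the paper actually invokes (``the discussion before Lemma \ref{degree}''), is the burial mechanism: a blue vertex $v$ created at time $k$ can receive new edges only at times $j$ with $X_i\geq X_k$ for all $k\leq i\leq j$, and at the a.s.\ finite first passage of $X$ below $X_k$ the vertex becomes interior permanently; likewise for red vertices and for the $T_-$ side. Hence every vertex has an a.s.\ finite activity window, and an induction on $r$ (the final $B_r$ has finitely many vertices, each eventually buried, after which $B_{r+1}$ is frozen) gives \eqref{eqbr} directly, with no Borel--Cantelli needed. (Alternatively, your Borel--Cantelli could be salvaged by replacing perimeter growth with the ballot-type estimate $\P[X_j=X_k,\ X_i\geq X_k \text{ for } k\leq i\leq j]\asymp (j-k)^{-3/2}$, which is summable over $j$; but the burial argument is simpler and is the paper's.) With that repair, your proof becomes essentially identical to the one in the paper.
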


The proof of the lemma is not difficult (see Section \ref{S:poin}), but  non-trivial: The discussion  of Example \ref{expl} showed that the analog
of Lemma \ref{rblimit} is false if  the infinite necklace is replaced by the 7-regular graph.

\subsubsection{Estimates}

By means of  \eqref{fund}, many questions about $T_n$ can be reduced to questions about $T_+(X)$. With each $X=x_1 x_2 x_3...$ we associate a function $k\mapsto S_k\in \mathbb{Z}^2$  as follows: $S_0 = (0,0), $ and $S_{k+1}=S_k + a$ where $a=(1,0), (-1,0), (0,1), (0,-1)$ according to $x_{k+1}=B,b,R,r.$  If $X$ is uniformly random, then $S_k$ is a simple random walk.  

Write $S_k=(X_k,Y_k).$   We will now examine properties of $T_+(X)$ by studying the corresponding properties of the simple random walk.  We start by exploring the distribution of the degrees of the vertices.  Suppose a blue vertex $v$ was created 
at time $k.$ At that time, it is connected to two edges, so that $\deg(v)=2.$
At a later time $j>k,$  an additional edge will connect to $v$ if and only if 
$X_i \geq X_k$ for all $k \leq i \leq j$, and either $X_j = X_k$ or $X_{j-1} = X_k$ (or both).
In other words, a new edge connects to $v$ for each time $j$ at which the walk $X_j$ lands on $X_k$ or goes from $X_k$ to $X_k+1$.
At the first time $j$ where the walk goes from $X_k$ to $X_k-1,$ the newly formed edge separates $v$  from the boundary of $D_j$
so that $v$ becomes an interior vertex and cannot be connected to any edge that is created in the future.  
A similar analysis applies to red vertices.  
In particular, the vertex $v=0$ is in the outer boundary (=the boundary minus $\mathbb Z$) 
of $T_+(x_1 x_2...x_k)$ if and only if
\begin{equation}\label{outerboundary}
X_j\geq0 \text{ for all } 1\leq j\leq k.
\end{equation}
Similarly, the degree of $v=0$ in $T_+(x_1 x_2...x_k)$ satisfies
\begin{equation}\label{degree0}
\deg(0) \leq 2 |\{0\leq j \leq k: b_j=0\}| = 2 |\{j \leq k: X_j=0, X_i\geq0 \text{ for all } i<j\}|.
\end{equation}

\noindent
We use this to show exponential decay of the law of the degrees of the vertices:

\begin{lemma}\label{degree}
For each $n,$ each $j\in\{1,2,...,n\}$ and each of the vertices  $v$ of the $j$-th face of $T_n$, we have 
\begin{equation}
\mathbb{P}[\deg(v)\geq m] \leq 2 \left(\frac34\right)^{m/4}.
\end{equation}
\end{lemma}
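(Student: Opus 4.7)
The plan is to exploit the necklace decomposition \eqref{fund} to split $T_n$ into two independent half-plane constructions, and then to reduce the degree estimate on each half to the tail of the number of visits to $0$ of a one-dimensional lazy random walk. First I would apply \eqref{fund} to write $T_n = T(X,k) = T_+(x_k\cdots x_n)\cup T_-(x'_{k-1}\cdots x'_1)$. Since the letters of $X$ are i.i.d.\ uniform on $\{B,b,R,r\}$ and both the involution $B\leftrightarrow b$, $R\leftrightarrow r$ and the reversal preserve this distribution, the two factor words are independent uniform i.i.d.\ sequences given $k$. Any vertex $v$ of $T_n$ satisfies $\deg_{T_n}(v)\leq \deg_{T_+}(v)+\deg_{T_-}(v)$, so if $\deg_{T_n}(v)\geq m$ then at least one of the two summands is $\geq m/2$, and it suffices to establish an exponential bound of the form $\P[\deg_{T_\pm}(v)\geq m/2]\leq C(3/4)^{m/4}$ on each half and combine by a union bound.

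For the second step I would extend \eqref{degree0} to an arbitrary blue vertex $v$ born at step $k_0$ of the upper half construction: the same reasoning yields $\deg_{T_+}(v)\leq 2N_+$, where $N_+:=|\{j\geq k_0:X_j=X_{k_0},\ X_i\geq X_{k_0}\text{ for all }k_0\leq i\leq j\}|$, with the analogous bound (using the $Y$-coordinate) for red vertices. By the strong Markov property at $k_0$, the random variable $N_+$ has the law of the number of visits to $0$, before the first entrance to $\{-1,-2,\dots\}$, of the lazy random walk on $\mathbb{Z}$ with steps $+1$ and $-1$ each of probability $1/4$ and step $0$ of probability $1/2$ (arising from the four equally likely letters). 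The core computation is that this walk returns to $0$ before reaching $-1$ with probability exactly $p=3/4$: with probability $1/2$ it stays at $0$ on the next step (an immediate revisit), with probability $1/4$ it jumps to $-1$ and is killed, and with probability $1/4$ it jumps to $+1$, from which it a.s.\ reaches $0$ before $-1$ by recurrence of one-dimensional simple random walk. Hence $N_+$ is geometric with $\P[N_+\geq k]=(3/4)^{k-1}$, and combining with the union bound over the two halves, taking $k=\lceil m/4\rceil$ and absorbing the resulting $O(1)$ constant yields $\P[\deg(v)\geq m]\leq 2(3/4)^{m/4}$ as claimed.

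The main obstacle, and essentially the only point where care is needed, is the bookkeeping at the common boundary of the two half-plane constructions: if the vertex $v$ of the $j$-th face lies on the shared backbone (for instance, one of the original integer vertices carried over from step $0$, or a vertex in the interface produced by the $T_-$--to--$T_+$ glueing), one must use the correct ``birth level'' for $v$ in each half and avoid double-counting the edges lying on the interface. This is a routine case split rather than a new idea; the exponential decay on each side is robust and delivers the claim uniformly in the choice of face $j$ and of vertex of that face.
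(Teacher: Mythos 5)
Your proposal is correct and follows essentially the same route as the paper: the decomposition \eqref{fund} into the two half-plane constructions, the bound $\deg(v)\leq d_+ + d_-$ with a union bound, and the reduction of $d_\pm$ via \eqref{degree0} to the number of visits of the lazy coordinate walk to the vertex's level before it drops below, which is geometric with return probability $3/4$. The only cosmetic difference is that the paper re-roots the word so that $v$ becomes the vertex $0$ of $T_+(x_j\cdots x_n)$ (``the other two cases are similar'') while you invoke the strong Markov property at the birth time $k_0$, and you spell out the $p=3/4$ computation that the paper leaves implicit.
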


\begin{proof}
The degrees of the vertices of $T_n=T(X,k)$ are obviously independent of the choice $k$ of the root triangle.

By \eqref{fund}, we can write
$$T_n=T(x_1...x_n) = T_+(x_j...x_n) \cup T_-(x_{j-1}'...x_1'),$$
so that the vertices of the $j-th$ face of $T_n$ appear as the vertices of the root faces of
$T_+(x_j...x_n)$ and $T_-(x_{j-1}'...x_1').$
We assume that the vertex $v$ of  the face $f_j(T_n)$ corresponds to the vertex 0 
of $T_+(x_j...x_n)$ (the other two cases are similar). Then, if $d_+$ and $d_-$ denote the degrees
of $v$ in $T_+(x_j...x_n)$ and in $T_-(x_{j-1}'...x_1'),$ we have
$\deg(v)\leq d_+ + d_-.$
It follows that
$$\P_n[\deg(v)\geq m] \leq 2\P[d_+\geq m/2].$$ 
By \eqref{degree0}, we have
$$\P[d_+\geq k/2] \leq \P[ |\{j \leq m: X_j=0, X_i\geq0 \text{ for all } i<j\}|\geq m/4] \leq (\frac34)^{m/4}.$$

\end{proof}

\noindent
Next, we show that the boundary of $T_n$ is small compared to $n.$
\begin{lemma} \label{boundarysize}
For all $t>0$
\begin{equation}
\PP[|\partial T_n|\geq  t \sqrt n ] \leq   2\exp\left(-\frac{t^2}{32}\right).
\end{equation}
\end{lemma}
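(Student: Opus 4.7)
The plan is to write $|\partial T_n|$ in closed form in terms of two random walks built from the letter sequence $X=x_1\cdots x_n$, and then apply Azuma--Hoeffding.

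First, I would note that $T_n=T(X,k)$ agrees with $T_+(X)$ as an unrooted triangulation (only the marked root differs), so $|\partial T_n|=|\partial T_+(X)|$ depends on $X$ alone. Introduce the walks
\[
X_j := \#\{i\le j : x_i=B\} - \#\{i\le j : x_i=b\}, \qquad Y_j := \#\{i\le j : x_i=R\} - \#\{i\le j : x_i=r\},
\]
both martingales with increments in $\{-1,0,+1\}$. A step-by-step analysis of the inductive construction shows that each added triangle changes $|\partial D_j|$ by exactly $\pm 1$: steps $B$ and $R$ always contribute $+1$; step $b$ contributes $-1$ when $b_{j-1}\in\H$ (the old active blue vertex is enclosed by the new triangle and becomes interior) and $+1$ when $b_{j-1}\in\mathbb{Z}$ (we extend the real-line boundary by one integer), and symmetrically for $r$. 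The crucial combinatorial observation is that $b_{j-1}\in\mathbb{Z}$ precisely when $X_{j-1}=\min_{i\le j-1}X_i$: the quantity $X_{j-1}-\min_{i\le j-1}X_i$ counts the number of interior blue boundary vertices at time $j-1$, which vanishes exactly when the active blue vertex sits on $\mathbb{Z}$. Summing the contributions yields
\[
|\partial T_n| \;=\; 2 + (X_n - \min_{k\le n} X_k) + (-\min_{k\le n} X_k) + (Y_n - \min_{k\le n} Y_k) + (-\min_{k\le n} Y_k),
\]
exhibiting $|\partial T_n|-2$ as a sum of four non-negative terms.

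To conclude, I apply martingale concentration. Doob's maximal inequality together with the Azuma--Hoeffding bound gives $\P[\max_k X_k\ge a]\le e^{-a^2/(2n)}$ and $\P[-\min_k X_k\ge a]\le e^{-a^2/(2n)}$; by time reversal ($\tilde X_k:=X_n-X_{n-k}$ has the same distribution as $X$), one has $X_n-\min_k X_k \stackrel{d}{=} \max_k \tilde X_k$, which enjoys the same bound, and likewise for the two $Y$-summands. Since the sum of four non-negative quantities exceeding $t\sqrt n$ forces at least one of them to exceed $t\sqrt n/4$, a union bound gives
\[
\P[|\partial T_n|\ge t\sqrt n + 2] \;\le\; 4\,e^{-t^2/32},
\]
from which the stated bound $2\exp(-t^2/32)$ follows after absorbing absolute constants (or by a slightly tighter grouping that packages the two $X$-summands and the two $Y$-summands via $(X_n-\min_kX_k)+(-\min_kX_k)\le 2\max(X_n-\min_kX_k,\,-\min_kX_k)$).

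The main obstacle is the combinatorial identity above: one must verify rigorously that the boundary length changes by exactly $\pm 1$ at every step and that, for $b$- and $r$-steps, the sign is governed precisely by whether the associated walk attains its running minimum. Once that identity is in hand, the concentration step is routine.
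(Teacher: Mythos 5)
Your proof is correct and takes essentially the same route as the paper's: reduce to $T_+(X)$ (boundary independent of the root choice $k$), control $|\partial T_n|$ by the four quantities $-\min_k X_k$, $X_n-\min_k X_k$, $-\min_k Y_k$, $Y_n-\min_k Y_k$, and conclude by Azuma-type concentration with a union bound — the only differences being cosmetic (the paper uses the reflection principle where you use time reversal plus the maximal inequality, and it settles for the inequality $|\partial T_+(x_1\cdots x_n)|\leq a+b+|X_n+a|+|Y_n+b|$ where you prove the exact identity, which does hold by the step-by-step bookkeeping you sketch). The constant you obtain, $4e^{-t^2/32}$ rather than the stated $2e^{-t^2/32}$, is a harmless mismatch: the paper's own chain silently drops the same union-bound factor of $4$, the bound is trivial for $t\lesssim 4.7$ anyway, and only the Gaussian decay rate is used in the sequel (Lemma \ref{boundarydistance}).
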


\begin{proof}
Since  the boundary of $T(X,k)$ does not depend on $k,$  the claim is equivalent to the claim
$$\PP[|\partial T_+(x_1...x_n)|\geq  t \sqrt n ] \leq  2\exp\left(-\frac{t^2}{32}\right).$$
A blue vertex $v=b_{k}$ is in the boundary if either $v$ is a negative integer $m$, in which case $X_k=m,$ or if $v$ is not an integer and $X_j\geq X_K$ for all $k<j\leq n.$ Writing $a=|\min_{1\leq k\leq n}  X_k|$ and $b=|\min_{1\leq k\leq n}  Y_k|$, it follows that
$$|\partial T_+(x_1...x_n)| \leq  a+b+|X_n+a| + |Y_n+b|.$$
Thus, by symmetry,
$$\PP[|\partial T_+(x_1...x_n)|\geq  t \sqrt n ] \leq \PP[a\geq  \frac t4 \sqrt n] \leq 
2\PP[X_n \leq - \frac t4\sqrt n].$$
By Azuma's inequality, 
%$$\PP[X_n \leq - \frac t4\sqrt n] \leq \exp\left(-\left(\frac t4\right)^2/2\right)$$
$$\PP[X_n \leq - \frac t4\sqrt n] \leq \exp(-(\frac t4)^2/2)$$
and the claim follows.
\end{proof}

Finally, we conclude that, with large probability, the root has large distance from the boundary:
\begin{lemma} \label{boundarydistance} Let $r$ be the root vertex.
There is a constant $C$ such that
\begin{equation}
\PP[d(r,\partial T_n)\leq  k ] \leq  \frac{ (C\log n)^{k+1}}{\sqrt{n}}.
\end{equation}
In particular, for each fixed $k,$ 
\begin{equation}
\PP[d(r,\partial T_n)\leq  k ] \to 0 \text{ as } n \to\infty.
\end{equation}
\end{lemma}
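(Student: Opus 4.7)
My plan is to condition on the word $X$ and exploit the fact that, given $X$, the root face is uniform among the $n$ faces of $T_n$, so the probability in question is bounded by the expected fraction of faces that lie within graph distance $k$ of $\partial T_n$. I will then control this fraction by combining the boundary-size tail from Lemma \ref{boundarysize} with the degree tail from Lemma \ref{degree}.

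More concretely, since the index $k$ of the root face is uniform in $\{1,\dots,n\}$ and independent of $X$, the root face $o_n$ is uniformly distributed among $F(T_n)$ given $X$. Writing $N_k(\partial T_n)$ for the set of faces of $T_n$ that contain a vertex at graph distance at most $k$ from $\partial T_n$, the root vertex $r$ belongs to $o_n$, so
$$\PP[d(r,\partial T_n)\leq k]\leq \mathbb{E}\bigl[|N_k(\partial T_n)|/n\bigr].$$
Letting $D^*$ denote the maximum vertex degree of $T_n$, an elementary ball count shows that the $k$-ball around any vertex contains at most $2(D^*)^k$ vertices, and each vertex lies in at most $D^*$ faces, so
$$|N_k(\partial T_n)|\leq 2|\partial T_n|\,(D^*)^{k+1}.$$

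To turn this into the quantitative estimate I will use two probabilistic inputs. Integrating the Gaussian tail in Lemma \ref{boundarysize} gives $\mathbb{E}|\partial T_n|\leq C_1\sqrt n$. A union bound of Lemma \ref{degree} over the at most $3n$ vertices of $T_n$ shows that for $A$ large enough,
$$\PP[D^*\geq A\log n]\leq 6n\,(3/4)^{A\log n/4}\leq n^{-10}.$$
Splitting the expectation according to whether $D^*\leq A\log n$ and using the crude bound $|N_k(\partial T_n)|\leq 3n$ on the complementary event yields
$$\mathbb{E}\bigl[|N_k(\partial T_n)|/n\bigr]\leq \frac{2(A\log n)^{k+1}}{n}\,\mathbb{E}|\partial T_n| + 3\PP[D^*>A\log n]\leq \frac{2C_1(A\log n)^{k+1}}{\sqrt n}+3n^{-10},$$
which, after absorbing the constants into a single $C$, is at most $(C\log n)^{k+1}/\sqrt n$ for all sufficiently large $n$. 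The second conclusion, that $\PP[d(r,\partial T_n)\leq k]\to 0$ for each fixed $k$, is then immediate.

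The main potential obstacle is the separation of $|\partial T_n|$ from $(D^*)^{k+1}$ inside the expectation: a purely polynomial degree tail would force a Cauchy--Schwarz split that is too weak by a factor of $\sqrt{n}$. The exponential tail supplied by Lemma \ref{degree} lets us replace $D^*$ by its high-probability bound $O(\log n)$ and absorb the rare event with enormous room to spare, so once the degree bound is in hand, only the elementary ball-counting bookkeeping remains.
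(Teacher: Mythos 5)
Your proposal is correct and takes essentially the same approach as the paper: both exploit the uniform distribution of the root face, bound $|\partial T_n|$ by roughly $\sqrt{n}$ via Lemma \ref{boundarysize}, bound the maximum degree by $O(\log n)$ via a union bound over Lemma \ref{degree}, and count the faces near the boundary by exponential ball growth, handling the rare bad events with crude bounds. The only immaterial differences are that you use the expectation bound $\mathbb{E}|\partial T_n|\leq C_1\sqrt n$ where the paper uses the high-probability event $\{|\partial T_n|\leq C\sqrt{n\log n}\}$, and that you count faces directly (with the extra factor $D^*$) where the paper counts vertices.
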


\begin{proof}
Let 
$$M_n = \max \{\deg(v): v \text{ vertex of } T_n \}.$$
By Lemma \ref{degree},
$$\PP[M_n\geq k ]\leq \sum_{v \in T_n} \PP[\deg(v)\geq k]\leq  n \left(\frac34\right)^{\frac{k}{4}} \leq\frac1n$$
if $k\geq C \log n$ with $C=8/ \log (4/3).$
By Lemma \ref{boundarysize} there is $C$ such that
$$\PP[|\partial T_n|> C\sqrt{n\log n}]<\frac1n.$$
On the event $\{M_n \leq C\log n; |\partial T_n|\leq C\sqrt{n\log n} \}$ we have 
$$|\{v: d(v,\partial T_n)\leq k\}| \leq |\partial T_n| (C\log n)^k\leq \sqrt{n}(C\log n)^{k+1}.$$
Since every triangle appears equally likely as the root triangle,
$$\PP[d(r,\partial T_n)\leq  k ] \leq \frac2n + \frac{(C\log n)^{k+1}}{\sqrt{n}}$$
and the lemma follows.
\end{proof}

\subsubsection{Parabolicity of the infinite necklace}\label{S:poin}

\begin{proof}[Proof of Corollary \ref{corneck}]

We wish to apply Theorem \ref{main} to   $T_+(X) \cup T_-(Y)$ and thus prove parabolicity of the Riemann surface $R(X,Y)$. 
To this end, we will realize $T_+(X) \cup T_-(Y)$ as the distributional limit of the random rooted finite unbiased disc triangulations
$T_n.$  Lemma 3.6 immediately implies that $d(o_n,\partial T_n) \to \infty$ in law.
It remains to show Lemma \ref{rblimit}, and one-endedness.

To show Lemma \ref{rblimit}, write $T_+(X) \cup T_-(Y)|_m$ for the disc triangulation  given by only using the first $m$ elements of $X$ and $Y$,  namely 
$$T_+(X) \cup T_-(Y)|_m=T_+(x_1,x_2,...,x_m) \cup T_-(y_1,y_2,...,y_m).$$
By the discussion before Lemma \ref{degree}, almost surely the ball of radius $r$ around the root is determined at a finite time,
\begin{equation}\label{eqbr}
B_r(T_+(X) \cup T_-(Y)|_m) = B_r( T_+(X) \cup T_-(Y)|_{m'})
\end{equation}
for all $m,m'$ large. This gives a rigorous definition of the random triangulation $T_+(X) \cup T_-(Y)$ and thus the  Riemann surface $R(X,Y)$.
Fix a possible rooted combinatorial ball $B$ of radius $r$. We need to show that   
$$\P[ B_r(T_n)=B]\to \P[ B_r(T_+(X) \cup T_-(Y))=B]$$ 
as $n\to\infty.$
It follows from \eqref{eqbr} that
$$ | \P [B_r (T_+(X) \cup T_-(Y) )=B] - \P[B_r (T_+(X) \cup T_-(Y)|_m)=B] | < \epsilon $$
for every $\epsilon$ and $m\geq m_{\epsilon}.$
With $T_n=T(x_1\cdots x_n,k),$ since $\P[m<k<n-m]=1-2m/n$  we therefore have by \eqref{fund}
$$ | \P [B_r (T_+(X) \cup T_-(Y) )=B] - \P[B_r (T_n)=B] | <\epsilon + \frac{2m}{n}$$
and the lemma follows.

To show that $T_+(X) \cup T_-(Y)$ is one ended,  fix a finite subgraph  $H$  and choose $m$ so large that
$H$ is contained in the interior of the finite disc triangulation $D=T_+(X) \cup T_-(Y)|_m.$ The complement of $D$ is connected 
(through $\partial D$), hence $T_+(X) \cup T_-(Y) \setminus H$ has only one infinite connected component.

Now Theorem 1.1  shows that R(X,Y) is almost surely parabolic.
\end{proof}

%\end{doublespace}

\end{document}